\algrenewcommand\algorithmicrequire{\textbf{Input:}}
\algrenewcommand\algorithmicensure{\textbf{Output:}}
\newcommand{\R}{\mathbb{R}}
\newcommand{\C}{\mathbb{C}}
\renewcommand{\H}{\mathbb{H}}
\renewcommand{\L}{\mathbb{L}}
\newcommand{\D}{\mathbb{D}}
\newcommand{\N}{\mathbb{N}}
\newcommand{\A}{\mathbb{A}}
\newcommand{\B}{\mathbb{B}}
\newcommand{\M}{\mathbb{M}}
\DeclareMathOperator*{\argmax}{arg\,max}
\DeclareMathOperator*{\diag}{diag}
\DeclareMathOperator*{\vectorize}{vec}
\newcommand{\bc}{\textbf{c}}
\newcommand{\bd}{\textbf{d}}
\newcommand{\bff}{\textbf{f}}
\newcommand{\br}{\textbf{r}}
\newcommand{\bn}{\textbf{n}}
\newcommand{\bu}{\textbf{u}}
\newcommand{\bx}{\textbf{x}}
\newcommand{\by}{\textbf{y}}
\newcommand{\bz}{\textbf{z}}
\newcommand{\Mu}{\mathrm{M}}
\newcommand{\bD}{\textbf{D}}
\newcommand{\bF}{\textbf{F}}
\newcommand{\bH}{\textbf{H}}
\newcommand{\bI}{\textbf{I}}
\newcommand{\bM}{\textbf{M}}
\newcommand{\bS}{\textbf{S}}
\newcommand{\bX}{\textbf{X}}
\newcommand{\bY}{\textbf{Y}}
\newcommand{\bZ}{\textbf{Z}}
\newcommand{\blambda}{\boldsymbol{\lambda}}
\newcommand{\bmu}{\boldsymbol{\mu}}
\newcommand{\halpha}{\hat{\alpha}}
\newcommand{\hbr}{\hat{\br}}
\newcommand{\cR}{\mathcal{R}}
\newcommand{\cD}{\mathcal{D}}
\newcommand{\cC}{\mathcal{C}}
\newcommand{\cH}{\mathcal{H}}
\newcommand{\cS}{\mathcal{S}}
\let\originalleft\left
\let\originalright\right
\renewcommand{\left}{\mathopen{}\mathclose\bgroup\originalleft}
\renewcommand{\right}{\aftergroup\egroup\originalright}
\newcommand{\fronorm}[1]{\left\lVert #1 \right\rVert_{\operatorname{F}}}
\newtheorem{theorem}{Theorem}
\newtheorem{proposition}{Proposition}
\newtheorem{example}{Example}
\begin{document}


\title{\LARGE{Multivariate Rational Approximation of Scattered Data Using the p-AAA Algorithm}
}
    
\author[$\ast$]{Linus Balicki}
\author[$\ast$]{Serkan Gugercin}
\affil[$\ast$]{Department of Mathematics, Virginia Tech, Blacksburg, VA, 24061, USA}
          
\keywords{}

\abstract{Many algorithms for approximating data with rational functions are built on interpolation or least-squares approximation. Inspired by the adaptive Antoulas–Anderson (AAA) algorithm for the univariate case, the parametric adaptive Antoulas–Anderson (p-AAA) algorithm extends this idea to the multivariate setting, combining least-squares and interpolation formulations into a single effective approximation procedure. In its original formulation p-AAA operates on grid data, requiring access to function samples at every combination of discrete sampling points in each variable. In this work we extend the p-AAA algorithm to scattered data sets, without requiring uniform/grid sampling.  In other words, our proposed p-AAA formulation operates on a set of arbitrary sampling points and is not restricted to a grid structure for the sampled data. Towards this goal, we introduce several formulations for rational least-squares optimization problems that incorporate interpolation conditions via constraints. We analyze the structure of the resulting optimization problems and introduce structured matrices whose singular value decompositions yield closed-form solutions to the underlying least-squares problems. Several examples illustrate computational aspects and the effectiveness of our proposed procedure. }

\novelty{}

\maketitle

\section{Introduction}
Rational approximation methods for functions of several variables have received increasing attention in recent years; see, e.g., \cite{antoulas2025,antoulas_loewner_2024,boulle2024,hokanson2020,ionita2014,rodriguez2023,benner_survey_2015,deschrijver_robust_2008,feng_posteriori_2017,hund_optimization-based_2022,mlinaric_interpolatory_2024} for a collection of recently proposed approaches. This growing interest is largely based on the tremendous success of single-variable rational approximation algorithms in a wide range of applications \cite{antoulas_chapter_2017,drmac_quadrature-based_2015,hujdurovic_aaa-least_2023,lietaert_automatic_2022,gustavsen_rational_1999,berljafa_rkfit_2017,antoulas_interpolatory_2020,beattie_realization-independent_2012,borghi_mathcal_2024,brennan_contour_2023,guttel_robust_2022,hokanson2017projected,wilber_data-driven_2022,rubin_bounding_2022,nakatsukasa_applications_2025}, many of which are now extended to the multivariate setting. An effective algorithm that falls into this category is the adaptive Antoulas-Anderson (AAA) algorithm \cite{nakatsukasa_aaa_2018}, together with its multivariate formulation, the parametric AAA (p-AAA) algorithm \cite{rodriguez2023}. In the two-variable case, p-AAA aims to approximate a bivariate function $\bff$ with a rational function $\br$ using a given set of sample data
\begin{equation}
    \label{eq:samples}
    \bD = \{ \bff(x,y) \; | \; (x,y) \in \bS \} \subset \C,
\end{equation}
with the corresponding sampling points
\begin{equation}
    \label{eq:samplingPoints}
    \bS = \{ x_1,\ldots,x_N \} \times \{ y_1,\ldots,y_M\} \subset \C^2.
\end{equation}
The key idea of the algorithm is to compute $\br \approx \bff$ such that some of the samples in $\bD$ are interpolated while the remaining data is captured via a least-squares (LS) approximation. This approach leads to an optimization problem at the core of p-AAA: A linear LS problem with equality constraints that enforce interpolation conditions at select points. In its original formulation~\cite{rodriguez2023}, p-AAA assumes that the set $\bS$ forms a grid (i.e., a cartesian product) of sampling points as shown in \eqref{eq:samplingPoints}. This means that for all $i = 1,\ldots,N$ and $j=1,\ldots,M$, it assumes that $\bff(x_i,y_j) \in \bD$ is among the sampled data. Additionally, interpolation in p-AAA is enforced on a sub-grid of interpolation points
\begin{equation}
    \label{eq:interpolationPoints}
    \bI = \{ \lambda_1,\ldots,\lambda_n \} \times \{ \mu_1,\ldots,\mu_m \} \subset \bS
\end{equation}
such that for all $i=1,\ldots,n$ and $j=1,\ldots,m$ we have
\begin{equation*}
\br(\lambda_i,\mu_j) = \bff(\lambda_i,\mu_j).
\end{equation*}
To derive the linear LS problem that is solved in p-AAA, first consider the constrained nonlinear LS problem
\begin{equation}
    \label{eq:introNonlinearLS}
    \min_{\bn,\bd\not\equiv 0} \sum_{(x,y) \in \bS} \left\lvert \bff(x,y) - \frac{\bn(x,y)}{\bd(x,y)} \right\rvert^2 \quad \text{s.t.} \quad \frac{\bn(\lambda,\mu)}{\bd(\lambda,\mu)} = \bff(\lambda,\mu) \quad \text{for} \quad (\lambda,\mu) \in \bI,
\end{equation}
where the rational function $\br$ is represented via
\begin{equation*}
    \br(x,y) = \frac{\bn(x,y)}{\bd(x,y)}.
\end{equation*}
The functions $\bn$ and $\bd$ are the denominator and numerator of a multivariate barycentric form that we will formally introduce in Section~\ref{sec:barycentricForm}. As we will explain in more detail later, the objective function of the optimization problem in \eqref{eq:introNonlinearLS} is generally nonlinear in the optimization variables. As in the univariate case and AAA, the p-AAA algorithm avoids solving the nonlinear LS problem in \eqref{eq:introNonlinearLS} explicitly and instead aims to compute an approximate solution to it by solving a closely related linear LS problem. This optimization problem reads
\begin{equation}
    \label{eq:introPAAALS}
    \min_{\bn,\bd \not\equiv 0} \sum_{(x,y) \in \bS} \left\lvert \bff(x,y) \bd(x,y)- \bn(x,y)\right\rvert^2 \quad \text{s.t.} \quad \frac{\bn(\lambda,\mu)}{\bd(\lambda,\mu)} = \bff(\lambda,\mu) \quad \text{for} \quad (\lambda,\mu) \in \bI.
\end{equation}
In contrast to the nonlinear problem \eqref{eq:introNonlinearLS} the linear LS problem  \eqref{eq:introPAAALS} admits a closed-form solution that can be computed efficiently, as we will discuss in detail later. We note that linear LS problems like the one stated in \eqref{eq:introPAAALS} appear in a similar form in many iterative algorithms that aim to solve the nonlinear LS problem in \eqref{eq:introNonlinearLS}. For example, the SK iteration \cite{sanathanan_transfer_1963}, which has been extended to the two-variable setting in \cite{hokanson2020}, solves a weighted version of the optimization problem in \eqref{eq:introPAAALS} without interpolation constraints in each iteration. In the context of single-variable rational approximation, a much wider range of algorithms based on similar linear LS problems have been proposed \cite{levy_complex-curve_1959,ackermann_second-order_2025,berljafa_rkfit_2017,gustavsen_rational_1999,whitfield_asymptotic_1987}.

In this work, we investigate the setting where the set of sampling points $\bS$ is an arbitrary subset of $\C^2$ and, in particular, is not constrained to follow the grid structure introduced in \eqref{eq:samplingPoints}. In this case, we call $\bS$ a scattered subset of $\C^2$. Our central goal will be to derive a formulation of the p-AAA algorithm that can be applied to such scattered data sets. At the core of this formulation will be a linear LS problem that incorporates constraints for enforcing interpolation conditions on a set of scattered interpolation points. As one of our key contributions, we show how to solve the resulting optimization problem efficiently by exploiting the underlying structure. Towards this goal, we investigate various other types of constrained and unconstrained optimization problems that cover a wide range of multivariate rational approximation settings. We begin by focusing on bivariate rational functions and later extend our results to the case where $\bff$ and $\br$ are $d$-variate functions with $d > 2$.

\begin{figure}[t]
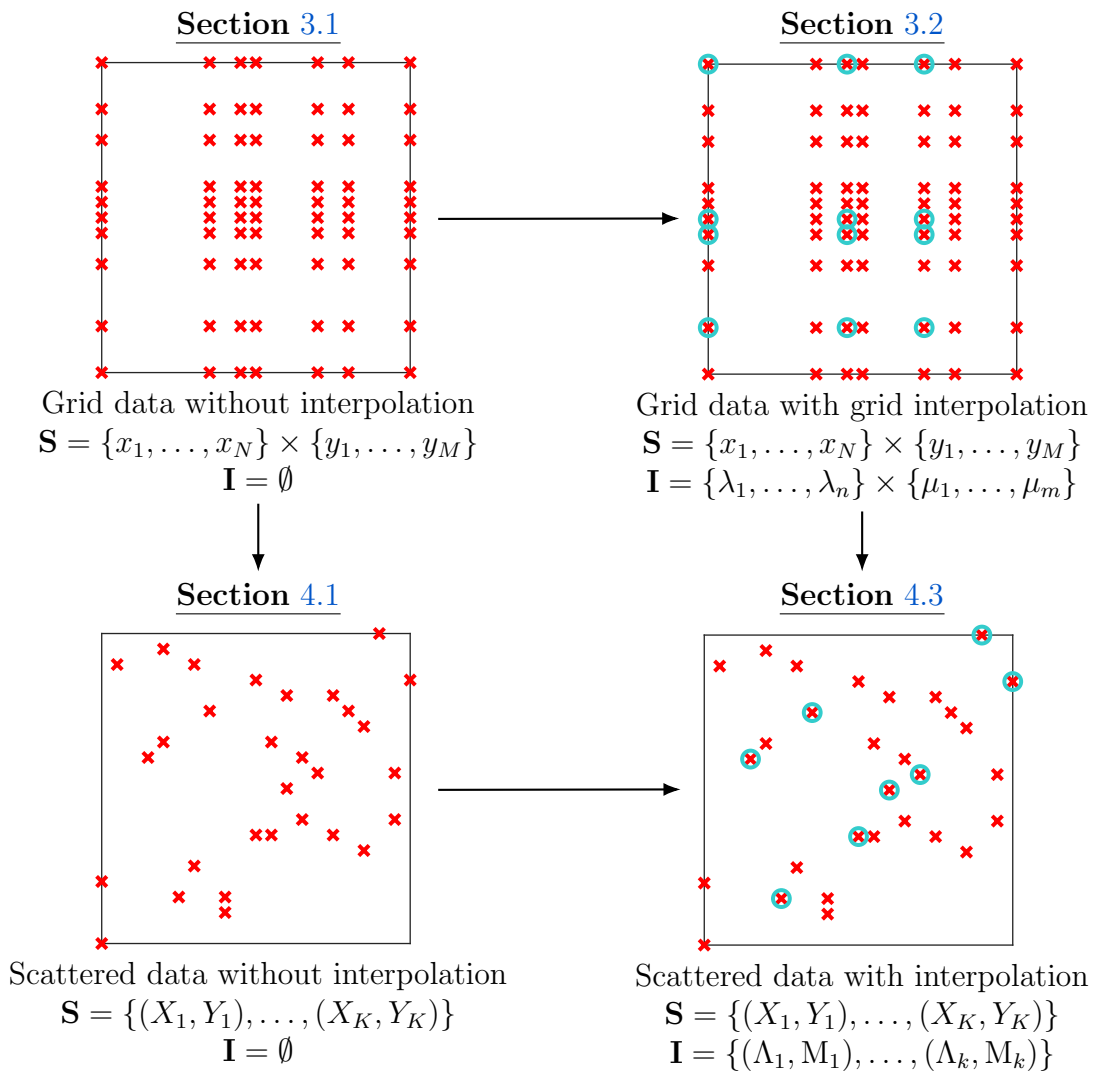

    \include{figures/ls_settings}
    \caption{Various settings for the constrained LS problem~\eqref{eq:introPAAALS} arising in rational approximation that we consider in this manuscript. Red crosses visualize the sampling data set $\bS$ and cyan circles the interpolation set $\bI$. An arrow originating at an illustrated setting indicates that it is a special case of the setting that it is pointing at.}
    \label{fig:overview}
\end{figure}
Our discussions and contributions are presented as follows: First, we revisit bivariate barycentric forms in Section~\ref{sec:barycentricForm}. In Section~\ref{sec:gridApprox} we derive linear LS problems for rational approximation for the case where sampling points form a grid. Based on this discussion we revisit the p-AAA algorithm in Section~\ref{sec:paaa}. In Section~\ref{sec:scatteredApprox}, we investigate linear LS problems that arise in rational approximation with scattered sampling points. This includes our first main result in Section~\ref{sec:scatteredLSInterpolation}, which demonstrates how scattered interpolation conditions can be effectively incorporated into the linear LS problem arising in p-AAA. Section~\ref{sec:scatteredPAAA} contains our second main result given in terms of a formulation of the p-AAA algorithm that operates on scattered sample data. In Section~\ref{sec:numerics} via three numerical examples we demonstrate how our algorithm operates. Finally, we state conclusions and possible future research directions in Section~\ref{sec:conclusions}. Figure~\ref{fig:overview} below provides an overview of the approximation problems in the general form of \eqref{eq:introPAAALS} discussed in Section~\ref{sec:gridApprox} and Section~\ref{sec:scatteredApprox}, along with the corresponding structures of the sample data set $\bS$ and the interpolation set $\bI$. 

\section{Two-variable barycentric forms}
\label{sec:barycentricForm}
The barycentric form of a multivariate rational function is used in several 
approximation and interpolation algorithms \cite{antoulas2025,ionita2014,rodriguez2023}. Its key advantage over various other rational representations is its numerical stability and the fact that interpolation conditions can be enforced in a straight-forward manner. For a rational function in two variables, the barycentric form reads
\begin{equation}
    \label{eq:originalBarycentricForm}
    \br(x,y) = \sum_{i=1}^{n} \sum_{j=1}^{m} \frac{\beta_{ij}}{(x - \lambda_i)(y - \mu_j)} \Bigg/ \sum_{i=1}^{n} \sum_{j=1}^{m} \frac{\alpha_{ij}}{(x - \lambda_i)(y - \mu_j)},
\end{equation}
where
\begin{equation}
    \label{eq:barycentricNodes}
    \blambda = \{ \lambda_1,\ldots,\lambda_n \} \subset \C \quad \text{and} \quad \bmu = \{ \mu_1,\ldots,\mu_m \} \subset \C
\end{equation}
form the set of barycentric nodes via $\blambda \times \bmu$ and the entries of $\alpha,\beta \in \C^{n\times m}$ given by
\begin{equation*}
    \alpha = \begin{bmatrix}
        \alpha_{11} & \cdots & \alpha_{1m} \\ \vdots & & \vdots \\ \alpha_{n1} & \cdots & \alpha_{nm}
    \end{bmatrix} \quad \text{and} \quad \beta = \begin{bmatrix}
        \beta_{11} & \cdots & \beta_{1m} \\ \vdots & & \vdots \\ \beta_{n1} & \cdots & \beta_{nm}
    \end{bmatrix}
\end{equation*}
are called barycentric coefficients. By multiplying the numerator and denominator of $\br$ with the polynomials $(x-\lambda_1) \cdots (x-\lambda_n)$, $(y - \mu_1) \cdots (y - \mu_m)$ and simplifying the resulting fraction we can easily see that $\br$ can be represented as a fraction of degree-$(n-1,m-1)$ polynomials in the variables $x$ and $y$. In this case, we say that $\br$ is a rational function of order-$(n-1,m-1)$. Further, $\br$ has removable singularities at all $(\lambda_i,\mu_j) \in \blambda \times \bmu$. In particular, we have
\begin{equation*}
    \lim_{(x,y) \rightarrow (\lambda_i,\mu_j)} \br(x,y) = \frac{\beta_{ij}}{\alpha_{ij}}~~~~\mbox{if}~~\alpha_{ij} \neq 0.
\end{equation*}
 It is useful to introduce a slightly modified version of the barycentric form that explicitly removes the removable singularities at the barycentric nodes. For this goal, we introduce the basis functions
\begin{equation*}
    g_{\blambda}^{(i)}(x) = \begin{cases}
        \frac{1}{x - \lambda_i} \quad &\text{if } x \notin \blambda, \\
        1 \quad &\text{if } x = \lambda_i, \\
        0 \quad &\text{else}
    \end{cases} \qquad \text{and} \qquad g_{\bmu}^{(j)}(y) = \begin{cases}
        \frac{1}{y - \mu_j} \quad &\text{if } y \notin \bmu, \\
        1 \quad &\text{if } y = \mu_j, \\
        0 \quad &\text{else}
    \end{cases}
\end{equation*}
and the barycentric numerator and denominator as
\begin{equation}
    \label{eq:nd}
    \bn(x,y) = \sum_{i=1}^n \sum_{j=1}^m \beta_{ij} g_{\blambda}^{(i)}(x) g_{\bmu}^{(j)}(y) \quad \text{and} \quad \bd(x,y) = \sum_{i=1}^n \sum_{j=1}^m \alpha_{ij} g_{\blambda}^{(i)}(x) g_{\bmu}^{(j)}(y),
\end{equation}
respectively. This allows for redefining the barycentric form as
\begin{equation}
    \label{eq:barycentricForm}
    \br(x,y) = \frac{\bn(x,y)}{\bd(x,y)}.
\end{equation}
With these definitions, $\bn(\lambda_i,\mu_j)$, $\bd(\lambda_i,\mu_j)$, and 
\begin{equation}
    \label{eq:barycentricNodeEvaluation}
    \br(\lambda_i,\mu_j) = \frac{\beta_{ij}}{\alpha_{ij}}
\end{equation}
are all well-defined, and the singularities at the barycentric nodes are therefore explicitly removed. 

We note that the barycentric nodes $\blambda \times \bmu$ defined via \eqref{eq:barycentricNodes} already appeared as the p-AAA interpolation set $\bI = \blambda \times \bmu$ introduced in \eqref{eq:interpolationPoints}. This means that p-AAA specifically interpolates data at all of the barycentric nodes. It is natural to pick the interpolation points $\bI = \blambda \times \bmu$, as we can choose $\beta_{ij} = \alpha_{ij} \bff(\lambda_i,\mu_j)$ and obtain
\begin{equation*}
    \br(\lambda_i,\mu_j) = \frac{\alpha_{ij} \bff(\lambda_i,\mu_j)}{\alpha_{ij}} = \bff(\lambda_i,\mu_j)
\end{equation*}
for $i=1,\ldots,n$ and $j=1,\ldots,m$ from the relation in \eqref{eq:barycentricNodeEvaluation}. Hence, interpolation on the set $\blambda \times \bmu$ can be enforced by simply imposing the constraint $\beta_{ij} = \alpha_{ij} \bff(\lambda_i,\mu_j)$ on the entries of $\beta$. As we will explain in detail later, p-AAA chooses the nodes via a greedy selection and determines barycentric coefficients via an LS optimization problem. For now, we assume that the set of nodes $\blambda \times \bmu$ is given and we seek to find barycentric coefficients $\alpha$ and $\beta$ such that the rational function $\br$ as in \eqref{eq:barycentricForm} is a good approximation of $\bff$. 

\section{Rational approximation with grid data}
\label{sec:gridApprox}
We begin by considering the set of samples $\bD$ in \eqref{eq:samples}, with a corresponding set of sampling points
\begin{equation*}
    \bS = \{ x_1,\ldots,x_N \} \times \{ y_1,\ldots,y_M \}
\end{equation*}
forming a grid. We note that in this case we can arrange the sample data in $\bD$ as a matrix
\begin{equation*}
    \D = \begin{bmatrix}
        \bff(x_1,y_1) & \cdots & \bff(x_1,y_M) \\
        \vdots & & \vdots \\
        \bff(x_N,y_1) & \cdots & \bff(x_N,y_M)
    \end{bmatrix} \in \C^{N \times M}.
\end{equation*}
The rational approximation problem that we consider here aims to find a rational function $\br = \bn / \bd$ such that $\br \approx \bff$ in some appropriate sense. Typically, approximation quality is measured in terms of the least squares (LS) or maximum error on the sample data $\bD$. Here, we focus on the former, which leads to an optimization problem of the form
\begin{equation}
    \label{eq:rationalLS}
    \min_{\br \in \cR} \sum_{i=1}^N \sum_{j=1}^M \left\lvert \bff(x_i,y_j) - \br(x_i,y_j) \right\rvert^2,
\end{equation}
where $\cR$ is the set of admissible rational functions in barycentric form. We will define $\cR$ for each of the considered approximation settings in their respective section.

\subsection{Linear least-squares for rational approximation}
\label{sec:gridLS}
\label{sec:linearLS}
As our first setting, assume that fixed sets of nodes as in \eqref{eq:barycentricNodes} are given, and we would like to solve \eqref{eq:rationalLS} without enforcing any interpolation conditions. This means we consider the interpolation set
\begin{equation*}
    \bI = \emptyset.
\end{equation*}
In this case the set of admissible rational functions reads
\begin{equation}
    \label{eq:unconstrainedAdmissibleFunctions}
    \cR = \left\{\frac{\bn}{\bd} \; \Big| \; \alpha, \beta \in \C^{n\times m} \; \text{and} \; \fronorm{\left[\alpha,\beta\right]} = 1 \right\},
\end{equation}
where $\bn$ and $\bd$ are as given in \eqref{eq:nd}. In other words, we assume that the basis functions for the barycentric form are fixed and we minimize the LS error over the barycentric coefficients $\alpha$ and $\beta$. Additionally, we impose the normalization constraint $\fronorm{\left[\alpha,\beta\right]} = 1$ to ensure that $\bn$ and $\bd$ are uniquely defined and there are no ambiguous representations of $\br$ arising from scaling the numerator and denominator by a scalar. While there are many other ways to normalize a barycentric form, it will become clear in our discussion below that this particular constraint is especially easy to enforce in the currently discussed setting. Next, we write the objective function in \eqref{eq:rationalLS} as
\begin{equation*}
    \sum_{i=1}^{N} \sum_{j=1}^{M} \left\lvert \bff(x_i,y_j) - \br(x_i,y_j) \right\rvert^2 = \sum_{i=1}^{N} \sum_{j=1}^{M} \frac{1}{\left\lvert \bd(x_i,y_j) \right\rvert^2} \left\lvert \bff(x_i,y_j)\bd(x_i,y_j) - \bn(x_i,y_j)\right\rvert^2.
\end{equation*}
By recalling the structure of $\bd$ introduced in \eqref{eq:nd} we see that the LS problem is nonlinear in the barycentric coefficients $\alpha$ due to the $1 / \lvert \bd(x_i,y_j) \rvert^2$ term. A common approach for approximately solving this nonlinear LS problem is to linearize it by simply dropping the $1 / \lvert \bd(x_i,y_j) \rvert^2$ term \cite{levy_complex-curve_1959}. This yields the linear LS problem
\begin{equation}
\label{eq:baselineGridLinearLS}
    \min_{\bn / \bd \in \cR} \sum_{i=1}^{N} \sum_{j=1}^{M} \left\lvert \bff(x_i,y_j)\bd(x_i,y_j) - \bn(x_i,y_j)\right\rvert^2.
\end{equation}
In order to write this LS problem concisely, we introduce the vectors
\begin{equation*}
    \bx = \left[ x_1, \ldots, x_N \right]^\top \in \C^N \quad \text{and} \quad \by = \left[ y_1, \ldots, y_M \right]^\top \in \C^M
\end{equation*}
based on the sampling points in $\bS$; the matrices $\cC_{\blambda}(\bx) \in \C^{n \times N}$ and $\cC_{\bmu}(\by) \in \C^{m \times M}$ 
\begin{equation*}
    \cC_{\blambda}(\bx) = \begin{bmatrix}
        g_{\blambda}^{(1)}(x_1) & \cdots & g_{\blambda}^{(1)}(x_N) \\ \vdots & & \vdots \\ g_{\blambda}^{(n)}(x_1) & \cdots & g_{\blambda}^{(n)}(x_N) 
    \end{bmatrix} \quad \text{and} \quad \cC_{\bmu}(\by) = \begin{bmatrix}
        g_{\bmu}^{(1)}(y_1) & \cdots & g_{\bmu}^{(1)}(y_M) \\ \vdots & & \vdots \\ g_{\bmu}^{(m)}(y_1) & \cdots & g_{\bmu}^{(m)}(y_M)
    \end{bmatrix};
\end{equation*}
and the diagonal matrix of samples $\cD = \diag(\vectorize(\D)) \in \C^{NM \times NM}$. The following proposition introduces a concise representation for $\eqref{eq:baselineGridLinearLS}$. As we will explain later, \Cref{prop1} is a special case of our main result presented in Theorem~\ref{theorem:scatteredInterpolationLS}, which we prove later.
\begin{proposition} \label{prop1}
    Consider the grid sampling points $\bS$ in $\eqref{eq:samplingPoints}$, the corresponding sample data $\bD$ in \eqref{eq:samples}, fixed sets of nodes $\blambda$ and $\bmu$ as in \eqref{eq:barycentricNodes}, and the set of admissible rational functions $\cR$ in \eqref{eq:unconstrainedAdmissibleFunctions}. Then the optimization problem in \eqref{eq:baselineGridLinearLS} can be written as
    \begin{equation}
    \label{eq:linearLS}
        \min_{\bn / \bd \in \cR} \sum_{i=1}^{N} \sum_{j=1}^{M} \left\lvert \bff(x_i,y_j)\bd(x_i,y_j) - \bn(x_i,y_j)\right\rvert^2 = \min_{\fronorm{\left[ \alpha, \beta \right]} = 1} \left\lVert \A \begin{bmatrix}
            \vectorize(\alpha) \\ \vectorize(\beta)
        \end{bmatrix} \right\rVert_2^2,
    \end{equation}
    where
    \begin{equation*}
        \A = \begin{bmatrix}
            \cD \left( \cC_{\blambda}(\bx) \otimes \cC_{\bmu}(\by) \right)^\top, & -\left( \cC_{\blambda}(\bx) \otimes \cC_{\bmu}(\by) \right)^\top
        \end{bmatrix} \in \C^{NM \times 2nm}.
    \end{equation*}
\end{proposition}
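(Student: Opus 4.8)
The plan is to collapse the grid of pointwise residuals $\bff(x_i,y_j)\bd(x_i,y_j)-\bn(x_i,y_j)$ into a single vector whose squared Euclidean norm is the objective, and then read off the matrix $\A$. First I would evaluate the barycentric numerator and denominator from \eqref{eq:nd} on the sampling grid. Since each value is $\bd(x_k,y_\ell)=\sum_{i,j}\alpha_{ij}\,g_{\blambda}^{(i)}(x_k)\,g_{\bmu}^{(j)}(y_\ell)$, collecting these into an $N\times M$ array and matching indices against the definitions of $\cC_{\blambda}(\bx)$ and $\cC_{\bmu}(\by)$ yields the bilinear identity $\bigl[\bd(x_k,y_\ell)\bigr]_{k,\ell}=\cC_{\blambda}(\bx)^\top\,\alpha\,\cC_{\bmu}(\by)$, and analogously $\bigl[\bn(x_k,y_\ell)\bigr]_{k,\ell}=\cC_{\blambda}(\bx)^\top\,\beta\,\cC_{\bmu}(\by)$.

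Next I would vectorize these matrix identities using the Kronecker relation $\vectorize(PXQ)=(Q^\top\otimes P)\,\vectorize(X)$. With the ordering convention fixed so that it is applied consistently to $\D$, $\alpha$, and $\beta$, this converts the denominator evaluations into $(\cC_{\blambda}(\bx)\otimes\cC_{\bmu}(\by))^\top\,\vectorize(\alpha)$ and the numerator evaluations into $(\cC_{\blambda}(\bx)\otimes\cC_{\bmu}(\by))^\top\,\vectorize(\beta)$. The diagonal sample matrix $\cD=\diag(\vectorize(\D))$ then scales the vectorized denominator values entrywise by $\bff(x_k,y_\ell)$, producing the vector of products $\bff(x_k,y_\ell)\bd(x_k,y_\ell)$.

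Assembling these pieces, the stacked residual vector is
\begin{equation*}
\cD\,(\cC_{\blambda}(\bx)\otimes\cC_{\bmu}(\by))^\top\,\vectorize(\alpha)-(\cC_{\blambda}(\bx)\otimes\cC_{\bmu}(\by))^\top\,\vectorize(\beta)=\A\begin{bmatrix}\vectorize(\alpha)\\\vectorize(\beta)\end{bmatrix},
\end{equation*}
with $\A$ exactly as in the statement. Because the sum of squared magnitudes of a vector equals its squared $2$-norm, the objective of \eqref{eq:baselineGridLinearLS} equals $\bigl\lVert\A\,[\vectorize(\alpha);\vectorize(\beta)]\bigr\rVert_2^2$. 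To finish, I would reconcile the constraint sets: since $\fronorm{[\alpha,\beta]}^2=\sum_{i,j}\lvert\alpha_{ij}\rvert^2+\sum_{i,j}\lvert\beta_{ij}\rvert^2$ equals the squared $2$-norm of the stacked coefficient vector, the normalization $\fronorm{[\alpha,\beta]}=1$ coincides with the unit-norm constraint on $[\vectorize(\alpha);\vectorize(\beta)]$, giving the claimed equality of the two minimization problems.

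The only genuinely delicate step is the index bookkeeping in the second paragraph: one must fix a single vectorization ordering (reading the grid row by row, consistent with how $\D$ is laid out) and verify that it forces the Kronecker factors to appear in the order $\cC_{\blambda}(\bx)\otimes\cC_{\bmu}(\by)$ rather than the reversed order, while simultaneously aligning the diagonal of $\cD$ entrywise with the residual vector. This is the main obstacle, though it is routine once the convention is pinned down; the remaining manipulations are direct substitutions.
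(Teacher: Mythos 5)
Your proof is correct and follows essentially the same route as the paper: the paper defers Proposition~\ref{prop1} to the proof of Theorem~\ref{theorem:scatteredInterpolationLS}, which likewise expresses the evaluations of $\bn$ and $\bd$ as Kronecker-structured linear maps of $\vectorize(\alpha)$ and $\vectorize(\beta)$, stacks the residuals, and identifies the normalization constraint with the unit $2$-norm condition on the stacked coefficient vector. Your use of the bilinear identity $\cC_{\blambda}(\bx)^\top\alpha\,\cC_{\bmu}(\by)$ plus the vec--Kronecker rule is just the grid-specialized form of the paper's pointwise Khatri--Rao argument, and you correctly flag the only delicate point, the vectorization ordering that fixes the factor order in $\cC_{\blambda}(\bx)\otimes\cC_{\bmu}(\by)$.
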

The optimization problem in \eqref{eq:linearLS} has a closed-form solution in terms of the right singular vector of $\A$ associated with its smallest singular value. Computing the LS solution via a singular value decomposition (SVD) will automatically yield barycentric coefficients that satisfy the normalization constraint
\begin{equation*}
    \left\lVert \begin{bmatrix}
        \vectorize(\alpha) \\ \vectorize(\beta)
    \end{bmatrix} \right\rVert_2 = \fronorm{\left[ \alpha, \beta \right]} = 1.
\end{equation*} 
Clearly, $\bn$ and $\bd$ as in \eqref{eq:nd} can be constructed based on the fixed set of nodes $\blambda$ and $\bmu$ in \eqref{eq:barycentricNodes} and the matrices $\alpha$ and $\beta$ that solve the optimization problem in \eqref{eq:linearLS}.

\subsection{Interpolation constraints for grid data}
\label{sec:gridLSInterpolation}
Next, we incorporate grid-based interpolation constraints into the LS problem in \eqref{eq:rationalLS}. In this setting, we again assume that the sampling points form a grid as in \eqref{eq:samplingPoints}. Additionally, we assume that the barycentric nodes form a sub-grid of $\bS$ such that
\begin{equation*}
    \blambda \times \bmu \subset \bS.
\end{equation*}
Further, we introduce an interpolation set that is formed by the barycentric nodes via
\begin{equation*}
    \bI = \blambda \times \bmu.
\end{equation*}
This setting allows for introducing a matrix of interpolation values via
\begin{equation*}
    \H = \begin{bmatrix}
        \bff(\lambda_1,\mu_1) & \cdots & \bff(\lambda_1,\mu_m) \\
        \vdots & & \vdots \\
        \bff(\lambda_n,\mu_1) & \cdots & \bff(\lambda_n,\mu_m)
    \end{bmatrix} \in \C^{n \times m}.
\end{equation*}
Our goal is now to derive a linear LS problem along the lines of \eqref{eq:baselineGridLinearLS} with the additional constraint that
\begin{equation*}
    \br(\lambda,\mu) = \bff(\lambda,\mu) \quad \text{for} \quad (\lambda,\mu) \in \bI.
\end{equation*}
Based on the property of barycentric forms introduced in \eqref{eq:barycentricNodeEvaluation}, we can achieve this goal by setting $\beta_{ij} = \alpha_{ij} \bff(\lambda_i,\lambda_j)$ for $i = 1,\ldots,n$ and $j=1,\ldots,m$. We can write this concisely in matrix-form via
\begin{equation}
\label{eq:betaConstraint}
\beta = \alpha \circ \H,
\end{equation}
where $\circ$ denotes the Hadamard (element-wise) product. This leads to the following set of admissible rational functions with grid interpolation constraints for the currently discussed optimization problem:
\begin{equation}
    \label{eq:gridInterpolationAdmissibleFunctions}
    \cR_{GI} = \left\{ \frac{\bn}{\bd} \; \Big| \; \alpha \in \C^{n\times m}, \; \beta = \alpha \circ \H \; \text{and} \; \fronorm{\alpha} = 1 \right\}.
\end{equation}
Next, we introduce the diagonal matrix of interpolated values $\cH = \diag(\vectorize(\H)) \in \C^{nm \times nm}$. \Cref{prop:2} introduces a concise form for the optimization problem
\begin{equation}
\label{eq:interpolatoryGridLinearLS}
    \min_{\bn / \bd \in \cR_{GI}} \sum_{i=1}^{N} \sum_{j=1}^{M} \left\lvert \bff(x_i,y_j)\bd(x_i,y_j) - \bn(x_i,y_j)\right\rvert^2
\end{equation}
based on the set of admissible rational functions introduced in \eqref{eq:gridInterpolationAdmissibleFunctions}. Again, this is a special case of our main result stated in Theorem~\ref{theorem:scatteredInterpolationLS}. Therefore, we delay its proof. 
\begin{proposition} \label{prop:2}
    Consider the grid sampling points $\bS$ in $\eqref{eq:samplingPoints}$, the corresponding sample data $\bD$ in \eqref{eq:samples}, fixed sets of nodes $\blambda$ and $\bmu$ as in \eqref{eq:barycentricNodes}, and the set of admissible rational functions $\cR_{GI}$ in \eqref{eq:gridInterpolationAdmissibleFunctions}. Then the optimization problem in \eqref{eq:interpolatoryGridLinearLS} can be written as
    \begin{equation}
        \label{eq:interpolatoryLinearLS}
        \min_{\bn / \bd \in \cR_{GI}} \sum_{i=1}^{N} \sum_{j=1}^{M} \left\lvert \bff(x_i,y_j)\bd(x_i,y_j) - \bn(x_i,y_j)\right\rvert^2 = \min_{\fronorm{\alpha} = 1} \left\lVert \L_2 \vectorize(\alpha) \right\rVert_2^2.
    \end{equation}
    where the 2D Loewner matrix reads
    \begin{equation}
    \label{eq:interpolationLSMatrix}
        \L_2 = \cD \left( \cC_{\blambda}(\bx) \otimes \cC_{\bmu}(\by) \right)^\top -\left( \cC_{\blambda}(\bx) \otimes \cC_{\bmu}(\by) \right)^\top \cH \in \C^{NM \times nm}.
    \end{equation}
\end{proposition}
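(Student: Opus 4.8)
The plan is to build directly on the representation established in Proposition~\ref{prop1}, since the present setting differs from that of Proposition~\ref{prop1} only through the interpolation constraint $\beta = \alpha \circ \H$ coupling the two coefficient blocks. Abbreviating $\cC = \cC_{\blambda}(\bx) \otimes \cC_{\bmu}(\by)$, the computation underlying Proposition~\ref{prop1} shows that, for \emph{any} coefficient matrices $\alpha,\beta \in \C^{n\times m}$,
\[
    \sum_{i=1}^{N}\sum_{j=1}^{M} \left\lvert \bff(x_i,y_j)\bd(x_i,y_j) - \bn(x_i,y_j)\right\rvert^2 = \left\lVert \cD\,\cC^\top \vectorize(\alpha) - \cC^\top \vectorize(\beta) \right\rVert_2^2,
\]
which is simply the block product $\A\,[\vectorize(\alpha);\vectorize(\beta)]$ written out. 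This identity is purely algebraic and does not yet invoke any normalization or interpolation condition.

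The key step is to rewrite the Hadamard constraint $\beta = \alpha \circ \H$ in vectorized form. Since element-wise multiplication becomes diagonal multiplication after vectorization, we have the identity $\vectorize(\alpha \circ \H) = \diag(\vectorize(\H))\,\vectorize(\alpha)$, and recalling $\cH = \diag(\vectorize(\H))$ this reads $\vectorize(\beta) = \cH\,\vectorize(\alpha)$. Substituting into the display above and factoring out the common vector $\vectorize(\alpha)$ gives
\[
    \left\lVert \cD\,\cC^\top \vectorize(\alpha) - \cC^\top \cH\, \vectorize(\alpha) \right\rVert_2^2 = \left\lVert \left( \cD\,\cC^\top - \cC^\top \cH \right) \vectorize(\alpha) \right\rVert_2^2 = \left\lVert \L_2 \vectorize(\alpha) \right\rVert_2^2,
\]
where the matrix in parentheses is precisely the 2D Loewner matrix $\L_2$ of \eqref{eq:interpolationLSMatrix}.

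It then remains to match the feasible sets. Under the constraint $\beta = \alpha \circ \H$ the block $\beta$ is completely determined by $\alpha$, so ranging over $\bn/\bd \in \cR_{GI}$ is equivalent to ranging over $\alpha \in \C^{n\times m}$ subject to the single normalization $\fronorm{\alpha} = 1$ built into the definition \eqref{eq:gridInterpolationAdmissibleFunctions}. Taking the minimum over this set on both sides of the pointwise identity yields the claimed equality \eqref{eq:interpolatoryLinearLS}.

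I expect no genuine difficulty in this argument; the only point requiring care is the bookkeeping of ordering conventions, namely ensuring that $\vectorize$ and the Kronecker product $\otimes$ are applied consistently with Proposition~\ref{prop1}, so that $\vectorize(\alpha \circ \H) = \cH\,\vectorize(\alpha)$ lines up index-for-index with the columns of $\cC^\top$. Should one wish to avoid relying on the (deferred) Proposition~\ref{prop1}, the same conclusion follows self-contained by expanding $\bn(x_i,y_j)$ and $\bd(x_i,y_j)$ through \eqref{eq:nd}, recognizing the separable grid evaluations as the Kronecker factor $\cC^\top$, and then applying the identical Hadamard-to-diagonal substitution.
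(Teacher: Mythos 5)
Your proposal is correct and follows essentially the same route as the paper: the paper defers the proof of Proposition~\ref{prop:2} to Theorem~\ref{theorem:scatteredInterpolationLS}, whose argument is exactly your computation --- write the objective as $\lVert \cD\,\cC^\top \vectorize(\alpha) - \cC^\top \vectorize(\beta)\rVert_2^2$, vectorize the Hadamard constraint into $\vectorize(\beta) = \cH\,\vectorize(\alpha)$ (plus an unconstrained term $\cS_{\bu}^\top\beta_{\bu}$ that vanishes in the full-grid-interpolation special case), and factor out $\vectorize(\alpha)$. The only difference is that you carry out the substitution directly for the special case rather than invoking the general theorem, which changes nothing of substance.
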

Similar to the previous setting, the optimization problem is solved by $\vectorize(\alpha)$ chosen as the right singular vector of $\L_2$ associated with its smallest singular value. Unlike before, $\beta$ is not directly minimized in the LS problem, as it is fully determined by $\alpha$ through the constraint $\beta = \alpha \circ \H$. Hence, to construct our rational approximant we solve \eqref{eq:interpolatoryLinearLS} for $\alpha$, compute $\beta$ via the relation in \eqref{eq:betaConstraint}, and construct $\bn$ and $\bd$ along the lines of \eqref{eq:nd}. An important question which we have not addressed so far is: How do we choose the barycentric nodes $\blambda \times \bmu$? In the currently discussed setting this is equivalent to the question: Which samples do we interpolate? This is an important question, because the choice of barycentric nodes greatly effects the approximation quality. The p-AAA algorithm, which we revisit in the next subsection, combines the LS problem in \eqref{eq:interpolatoryLinearLS} with an effective strategy for choosing the barycentric nodes.

\subsection{The p-AAA algorithm}
\label{sec:paaa}
The p-AAA algorithm is an iterative algorithm for rational approximation that successively increases the set of interpolation points $\bI$ and solves the LS problem in \eqref{eq:interpolatoryLinearLS}. Importantly, the set of interpolation points in p-AAA coincides exactly with the barycentric nodes in each step, i.e., $\bI = \blambda \times \bmu$. The algorithm is initialized with empty sets $\blambda = \bmu = \emptyset$ and a constant approximant $\br \equiv \operatorname{mean}(\bD)$. The approximant $\br$ is then updated via the two following iteration steps:
\begin{enumerate}
    \item A greedy selection that adds new barycentric nodes.
    \item An update of the barycentric coefficients $\alpha$ via the solution of the linear LS problem in \eqref{eq:interpolatoryLinearLS}.
\end{enumerate}
For the greedy selection we determine the sampling point $(\lambda_*,\mu_*)$ where the current approximation error over all samples is largest, i.e.,
\begin{equation}
    \label{eq:gridGreedy}
    (\lambda_*,\mu_*) = \argmax_{(x,y) \in \bS} \lvert \bff(x,y) - \br(x,y) \rvert.
\end{equation}
The value $\lambda_*$ is then added to $\blambda$ and similarly $\mu_*$ is added to $\bmu$. In the second step we solve the constrained LS problem introduced in \eqref{eq:interpolatoryLinearLS} for the barycentric coefficients $\alpha$ and update $\beta$ via \eqref{eq:betaConstraint}. Based on $\alpha,\beta,\blambda$ and $\bmu$ we can then construct $\bn$ and $\bd$ via \eqref{eq:nd} and finally the rational approximant as $\br = \bn / \bd$. The procedure is summarized in Algorithm~\ref{alg:paaa}. 
\begin{algorithm}[t]
    \caption{p-AAA}\label{alg:paaa}
    \begin{algorithmic}[1]
        \Require{Grid of sampling points $\bS$, sample data $\bD$}
        \Ensure{$\br \approx \bff$}
        \State{Initialize $\br \equiv \operatorname{mean}(\bD)$}
        \State{$\blambda,\bmu \gets \emptyset$}
        \While{error $>$ desired tolerance}
        \State{Determine $ \left( \lambda_*, \mu_* \right) $ via the greedy selection in \eqref{eq:gridGreedy}}
        \State Update the interpolation sets:
        \State\hspace{\algorithmicindent} $\blambda \gets \blambda \cup \{ \lambda_* \}$
        \State\hspace{\algorithmicindent} $\bmu \gets \bmu \cup \{ \mu_* \}$
        \State{Solve the LS problem in \eqref{eq:interpolatoryLinearLS} for $\alpha$ and determine $\beta$ via \eqref{eq:betaConstraint}}
        \State{Update the rational approximant $\br$ using $\alpha, \beta , \blambda$ and $\bmu$}
        \State{error $ \gets \max_{i,j} \; \left\lvert \bff(x_i,y_j) - \br\left(x_i,y_j\right) \right\vert / \max_{i,j} \; \left\lvert \bff(x_i,y_j) \right\rvert $}
        \EndWhile
    \end{algorithmic}
\end{algorithm}
\begin{example}
\label{ex:paaaPeaks}
In Figure~\ref{fig:paaaPeaks} we depict a p-AAA approximation of the MATLAB ``peaks'' function
\begin{equation}
    \label{eq:peaks}
    \bff(x,y) = 3(1 - x)^2 e^{-x^2 - (y + 1)^2}
- 10\left(\frac{x}{5} - x^3 - y^5\right) e^{-x^2 - y^2}
- \frac{1}{3} e^{-(x + 1)^2 - y^2}.
\end{equation}
We use a uniform grid with $40$ sampling points in $[-3,3]$ in each variable. Algorithm~\ref{alg:paaa} was executed until a relative maximum error of $10^{-8}$ was reached. This resulted in an order $(16,16)$ rational approximation after $23$ iterations. Out of the $40^2 = 1600$ sampling points, p-AAA chose to interpolate $17^2 = 289$. Note that in p-AAA the interpolation points always form a sub-grid of the sampling points.
\begin{figure}[t]
\includegraphics{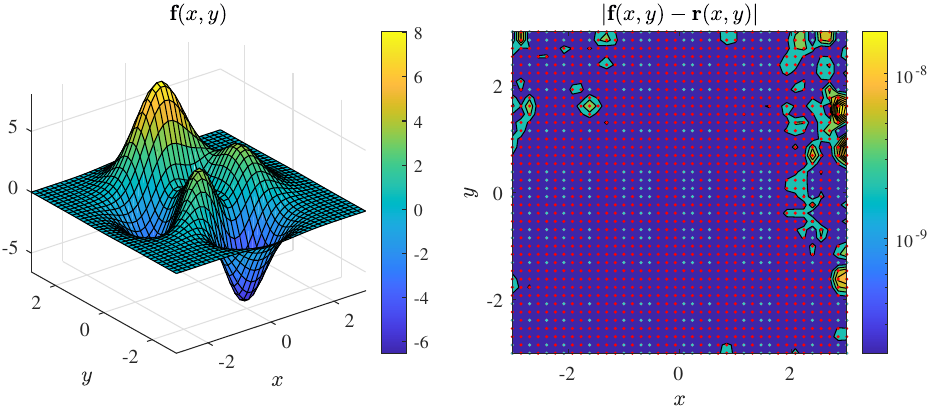}
\caption{Algorithm~\ref{alg:paaa} applied to the peaks function which is depicted in the left subfigure. On the right, we see the p-AAA approximation error after $23$ iterations. Additionally, locations of sampling points (red dots) and interpolation points selected by the algorithm (cyan dots) are depicted.}
\label{fig:paaaPeaks}
\end{figure}
\end{example}

\section{Rational approximation with scattered data}
\label{sec:scatteredApprox}
So far, we have considered sampling points that form a grid. However, grid data may not always be available in practice. This is especially true when we work with more than two variables (as we will discuss in Section~\ref{sec:multiplevariables}), in which case grid sampling may require a substantial amount of computation and memory. If only scattered data is available, the previously introduced version of the p-AAA algorithm is not applicable. Motivated by this limitation, we now generalize our previous discussion to scattered sampling data as outlined in Figure~\ref{fig:overview}. In this setting, we consider the samples in \eqref{eq:samples}, with a corresponding set of scattered sampling points
\begin{equation}
    \label{eq:scatteredSamplingPoints}
    \bS = \{ (X_1,Y_1),\ldots,(X_K,Y_K) \} \subset \C^2.
\end{equation}
We specifically allow $X_i = X_j$ or $Y_i = Y_j$ for $i\neq j$, but do not require it. Clearly, this representation of $\bS$ generalizes the grid formulation that we previously considered in \eqref{eq:samplingPoints}. Similar to the previous section, we consider here an optimization problem of the form
\begin{equation}
    \label{eq:scatteredRationalLS}
    \min_{\br \in \cR} \sum_{i=1}^K \left\lvert \bff(X_i,Y_i) - \br(X_i,Y_i) \right\rvert^2.
\end{equation}
As before, we work with rational functions that are represented in the barycentric form that we introduced in \eqref{eq:barycentricForm}.

\subsection{Linear least-squares for scattered data}
\label{sec:scatteredLS}
We begin with the case in which no interpolation conditions are enforced, that is,
\begin{equation*}
    \bI = \emptyset.
\end{equation*}
As in the setting where the sample data forms a grid, we use the set of admissible rational functions
\begin{equation*}
    \cR = \left\{\frac{\bn}{\bd} \; \Big| \; \alpha, \beta \in \C^{n\times m} \; \text{and} \; \fronorm{\left[\alpha,\beta\right]} = 1 \right\},
\end{equation*}
for the optimization problem in \eqref{eq:scatteredRationalLS}. A linear LS problem along the lines of \eqref{eq:baselineGridLinearLS} adapted to the scattered data setting reads
\begin{equation}
\label{eq:baselineScatteredLinearLS}
    \min_{\bn / \bd \in \cR} \sum_{i=1}^{K} \left\lvert \bff(X_i,Y_i)\bd(X_i,Y_i) - \bn(X_i,Y_i)\right\rvert^2.
\end{equation}
Next, we introduce the vectors
\begin{equation*}
    \bX = \left[ X_1, \ldots, X_K
    \right]^\top  \in \C^K \quad \text{and} \quad \bY = \left[ Y_1,\ldots,Y_K \right]^\top  \in \C^K,
\end{equation*}
the diagonal matrix of samples $\cD = \diag(\bff(X_1,Y_1),\ldots,\bff(X_K,Y_K)) \in \C^{K \times K}$ and denote the Khatri-Rao (column-wise Kronecker) product \cite{kolda_tensor_2009} via the $\odot$-symbol. As in our previous discussions, the following proposition presents a special case of Theorem~\ref{theorem:scatteredInterpolationLS}; thus we delay its proof until then. 
\begin{proposition}
    Consider the scattered sampling points $\bS$ in $\eqref{eq:scatteredSamplingPoints}$, the corresponding sample data $\bD$ in \eqref{eq:samples}, fixed sets of nodes $\blambda$ and $\bmu$ as in \eqref{eq:barycentricNodes}, and the set of admissible rational functions $\cR$ in \eqref{eq:unconstrainedAdmissibleFunctions}. Then the optimization problem in \eqref{eq:baselineScatteredLinearLS} can be written as
    \begin{equation}
    \label{eq:scatteredLinearLS}
        \min_{\bn / \bd \in \cR} \sum_{i=1}^{K} \left\lvert \bff(X_i,Y_i)\bd(X_i,Y_i) - \bn(X_i,Y_i)\right\rvert^2 = \min_{\fronorm{\left[ \alpha, \beta \right]} = 1} \left\lVert \B \begin{bmatrix}
            \vectorize(\alpha) \\ \vectorize(\beta)
        \end{bmatrix} \right\rVert_2^2
    \end{equation}
    where
    \begin{equation}
        \label{eq:scatteredLSMatrix}
        \B = \begin{bmatrix}
            \cD \left( \cC_{\blambda}(\bX) \odot \cC_{\bmu}(\bY) \right)^\top, & -\left( \cC_{\blambda}(\bX) \odot \cC_{\bmu}(\bY) \right)^\top
        \end{bmatrix} \in \C^{K \times 2nm}.
    \end{equation}
\end{proposition}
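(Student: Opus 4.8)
The plan is to rewrite the sum in \eqref{eq:baselineScatteredLinearLS} as the squared Euclidean norm of a single residual vector in $\C^K$ whose $i$-th entry is $\bff(X_i,Y_i)\bd(X_i,Y_i) - \bn(X_i,Y_i)$, and then to identify the linear map sending $(\vectorize(\alpha),\vectorize(\beta))$ to that vector with the matrix $\B$ in \eqref{eq:scatteredLSMatrix}. The one genuinely new ingredient compared with the grid case is that the pairing of the two variables is now fixed sample-by-sample, which is exactly what the Khatri-Rao (column-wise Kronecker) product encodes.

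First I would unpack the evaluations of the barycentric numerator and denominator from \eqref{eq:nd}. For each sample index $i$,
\[
\bd(X_i,Y_i) = \sum_{p=1}^n\sum_{q=1}^m \alpha_{pq}\, g_{\blambda}^{(p)}(X_i)\,g_{\bmu}^{(q)}(Y_i),
\qquad
\bn(X_i,Y_i) = \sum_{p=1}^n\sum_{q=1}^m \beta_{pq}\, g_{\blambda}^{(p)}(X_i)\,g_{\bmu}^{(q)}(Y_i).
\]
Let $\bw_i \in \C^{nm}$ be the Kronecker product $[\cC_{\blambda}(\bX)]_{:,i}\otimes[\cC_{\bmu}(\bY)]_{:,i}$ of the $i$-th columns of $\cC_{\blambda}(\bX)$ and $\cC_{\bmu}(\bY)$. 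By the definition of the Khatri-Rao product, $\bw_i$ is precisely the $i$-th column of $\cC_{\blambda}(\bX)\odot\cC_{\bmu}(\bY)$, and its entries are exactly the products $g_{\blambda}^{(p)}(X_i)\,g_{\bmu}^{(q)}(Y_i)$ arranged in the ordering induced by $\vectorize$. Consequently $\bd(X_i,Y_i) = \bw_i^\top \vectorize(\alpha)$ and $\bn(X_i,Y_i) = \bw_i^\top \vectorize(\beta)$.

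Next I would stack these identities over $i=1,\ldots,K$. Writing $\bW = \cC_{\blambda}(\bX)\odot\cC_{\bmu}(\bY) \in \C^{nm\times K}$, so that the rows of $\bW^\top$ are the $\bw_i^\top$, the vectors of denominator and numerator evaluations become $\bW^\top\vectorize(\alpha)$ and $\bW^\top\vectorize(\beta)$. Multiplying the former entrywise by the sample values $\bff(X_i,Y_i)$ amounts to left-multiplication by $\cD$, so the stacked residual vector is
\[
\cD\,\bW^\top\vectorize(\alpha) - \bW^\top\vectorize(\beta)
= \begin{bmatrix}\cD\,\bW^\top & -\bW^\top\end{bmatrix}
\begin{bmatrix}\vectorize(\alpha)\\ \vectorize(\beta)\end{bmatrix}
= \B\begin{bmatrix}\vectorize(\alpha)\\ \vectorize(\beta)\end{bmatrix},
\]
which recovers $\B$ as in \eqref{eq:scatteredLSMatrix}. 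Taking the squared $2$-norm reproduces the sum in \eqref{eq:baselineScatteredLinearLS} term by term, and the normalization $\fronorm{[\alpha,\beta]}=1$ is identical to $\lVert(\vectorize(\alpha)^\top,\vectorize(\beta)^\top)^\top\rVert_2 = 1$, since stacking and vectorizing preserve the Frobenius/Euclidean norm. This gives the claimed equality of the two minimization problems.

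The step I expect to require the most care is the column-wise identification $\bw_i = [\cC_{\blambda}(\bX)]_{:,i}\otimes[\cC_{\bmu}(\bY)]_{:,i}$ together with the bookkeeping that the ordering of the entries of $\bw_i$ is consistent with the ordering used by $\vectorize(\alpha)$ and $\vectorize(\beta)$; this is exactly the point at which the scattered setting departs from the grid setting, where the full Kronecker product $\cC_{\blambda}(\bx)\otimes\cC_{\bmu}(\by)$ enumerates all $N\cdot M$ grid pairs rather than the $K$ matched scattered pairs selected by $\odot$. Once this indexing is pinned down the remaining manipulations are routine linear algebra; the very same template underlies \Cref{prop1} and \Cref{prop:2}, and augmenting it with the interpolation substitution $\beta = \alpha \circ \H$ from \eqref{eq:betaConstraint} will upgrade it to the full result of Theorem~\ref{theorem:scatteredInterpolationLS}.
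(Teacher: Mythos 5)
Your proposal is correct and follows essentially the same route as the paper: the paper proves this proposition as the special case $\bI=\emptyset$ of Theorem~\ref{theorem:scatteredInterpolationLS}, whose proof begins with exactly your identification $\bd(X_i,Y_i)=\left[\cC_{\blambda}(X_i)\otimes\cC_{\bmu}(Y_i)\right]^\top\vectorize(\alpha)$ (and likewise for $\bn$), followed by stacking over $i$ into the Khatri--Rao product and left-multiplying by $\cD$. Your closing remark that adding the substitution $\beta_{\bc}=\alpha_{\bc}\circ\bH$ upgrades the argument to the full theorem is precisely how the paper organizes the material.
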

It can easily be verified that the LS problem in \eqref{eq:scatteredLinearLS} exactly recovers \eqref{eq:linearLS}, if the sampling points in $\bS$ form a grid as in \eqref{eq:samplingPoints}. This means that we have introduced a generalization of the problem discussed in Section~\ref{sec:gridLS}.

\subsection{Singularities of sparse barycentric forms}
\label{sec:barycentricSingularities}
Next, we consider scattered sampling points $\bS$ as in \eqref{eq:scatteredSamplingPoints} and interpolation points given by the scattered set
\begin{equation*}
    \bI = \{ (\Lambda_1,\Mu_1),\ldots,(\Lambda_k,\Mu_k) \}.
\end{equation*}
Before presenting our main result in the next section, we first highlight a potential pitfall that arises in the scattered interpolation setting. Consider a barycentric form that reads
\begin{equation}
\label{eq:sparseBarycentricForm}
    \hbr(x,y) = \sum_{i=1}^{K} \frac{\halpha_i \bff(\Lambda_i,\Mu_i)}{(x - \Lambda_i)(y - \Mu_i)} \Bigg/ \sum_{i=1}^{K} \frac{\halpha_{i}}{(x - \Lambda_i)(y - \Mu_i)}.
\end{equation}
This barycentric form is different than the one introduced in \eqref{eq:originalBarycentricForm} as its barycentric nodes are in general given by the scattered set $\bI$ rather than the grid $\blambda \times \bmu$. It can easily be verified that $\hbr$ is a rational function of order $(K-1,K-1)$ and has removable singularities at $(\Lambda_i,\Mu_i)$ that evaluate to $\bff(\Lambda_i,\Mu_i)$. Hence, we can use $\hbr$ to enforce interpolation conditions on a scattered set. However, $\hbr$ typically has other singularities at undesirable locations. For example, assume that the interpolation points and samples are distinct. This means for all $i \neq j$ we assume $\Lambda_i \neq \Lambda_j$, $\Mu_i \neq \Mu_j$ and $\bff(\Lambda_i,\Mu_i) \neq \bff(\Lambda_j,\Mu_j)$. Based on the definition of $\hbr$ in \eqref{eq:sparseBarycentricForm} we obtain the limits
\begin{equation*}
    \lim_{x \rightarrow \Lambda_i} \hbr(x,y) = \bff(\Lambda_i,\Mu_i) \quad \text{and} \quad \lim_{y \rightarrow \Mu_j} \hbr(x,y) = \bff(\Lambda_j,\Mu_j).
\end{equation*}
From this it follows that $\hbr$ has a non-removable singularity at $(\Lambda_i,\Mu_j)$ for all $i \neq j$ because
\begin{equation*}
    \lim_{y \rightarrow \Mu_j} \lim_{x \rightarrow \Lambda_i} \hbr(x,y) = \bff(\Lambda_i,\Mu_i) \neq \bff(\Lambda_j,\Mu_j) = \lim_{x \rightarrow \Lambda_i} \lim_{y \rightarrow \Mu_j} \hbr(x,y).
\end{equation*}
In particular, it is possible to rewrite $\hbr$ in \eqref{eq:sparseBarycentricForm} exactly in the barycentric form introduced in \eqref{eq:originalBarycentricForm}, but now with barycentric coefficients that satisfy $\alpha_{ij} = \beta_{ij} = 0$ for $i \neq j$. Thus the singularities are guaranteed to appear at the combinations of interpolation nodes where we have no data available. For example, $\hbr$ interpolates data at $(\Lambda_i, \Mu_i)$ and $(\Lambda_j, \Mu_j)$ but has a singularity at $(\Lambda_i, \Mu_j)$. In most applications, this is undesirable behavior. We will avoid these issues by working with the barycentric form introduced in \eqref{eq:barycentricForm} instead. With this barycentric form we will generically obtain $\alpha_{ij},\beta_{ij} \neq 0$, and hence no singularities at $(\Lambda_i, \Mu_j)$.
\begin{example}
\label{ex:singularities}
Consider the interpolation points
\begin{equation*}
    (\Lambda_1,\Mu_1) = (-1,1), \quad
    (\Lambda_2,\Mu_2) = (3,5)
\end{equation*}
with the corresponding interpolation samples
\begin{equation*}
    \bff(\Lambda_1,\Mu_1) = -1, \quad
    \bff(\Lambda_2,\Mu_2) = 3,
\end{equation*}
and barycentric coefficients
\begin{equation*}
    \halpha_1 = -5, \quad
    \halpha_2 = 1.
\end{equation*}
Now consider the interpolating rational function
\begin{equation*}
    \hbr(x,y) = \frac{
        \frac{-5}{(x + 1)(y - 1)} + \frac{3}{(x - 3)(y - 5)}
    }{
        \frac{5}{(x + 1)(y - 1)} + \frac{1}{(x - 3)(y - 5)}
    }.
\end{equation*}
At $(\Lambda_1,\Mu_1) = (-1,1)$ and $(\Lambda_2,\Mu_2) = (3,5)$ the function interpolates $\bff$. However, $\hbr$ has singularities at $(\Lambda_1,\Mu_2) = (-1,5)$ and $(\Lambda_2,\Mu_1)=(3,1)$. These singularities are visualized in Figure~\ref{fig:singularities}. Having singularities within the approximation domain is typically undesirable. This is guaranteed to happen with the sparse barycentric form $\hbr$ in this example, for any choice of $\halpha_1 \neq 0$ and $\halpha_2 \neq 0$.

\begin{figure}
\centering
\includegraphics{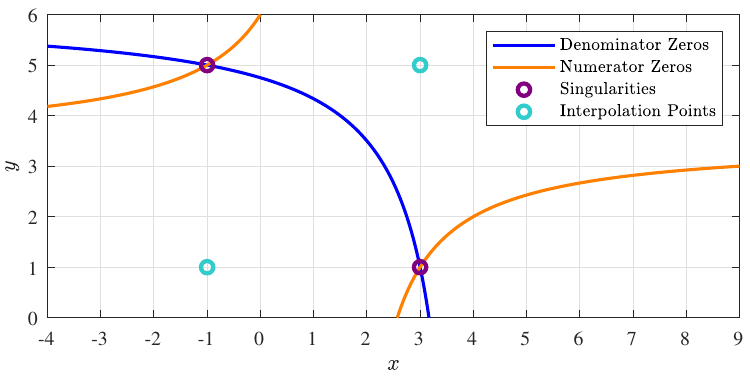}
\caption{Zeros of the numerator and denominator, singularities, and interpolation points discussed in Example~\ref{ex:singularities}.}
\label{fig:singularities}
\end{figure}
\end{example}

\subsection{Interpolation constraints for scattered data}
\label{sec:scatteredLSInterpolation}
For our final setting we consider scattered sampling points given by $\bS$ in \eqref{eq:scatteredSamplingPoints}, barycentric nodes $\blambda \times \bmu$ as in \eqref{eq:barycentricNodes}, and the scattered interpolation set
\begin{equation}
    \label{eq:scatteredInterpolationPoints}
    \bI = \{ (\Lambda_1,\Mu_1),\ldots,(\Lambda_k,\Mu_k) \} \subset \blambda \times \bmu.
\end{equation}
Our key idea for enforcing interpolation on the scattered set $\bI$ via a rational function in barycentric form as in \eqref{eq:barycentricForm} is to constrain some of the entries of the coefficient matrix $\beta$ and leave the remaining ones as optimization variables in the LS problem. Specifically, we enforce the constraint
\begin{equation}
    \label{eq:scatteredInterpolationConstraint}
    \beta_{ij} = \alpha_{ij} \bff(\lambda_i,\mu_j) \quad \text{if} \quad (\lambda_i,\mu_j) \in \bI,
\end{equation}
which is equivalent to the interpolation condition
\begin{equation}
    \label{eq:scatterdInterpolationCondition}
    \br(\Lambda_i,\Mu_i) = \bff(\Lambda_i,\Mu_i)
\end{equation}
for $i=1,\ldots,k$ assuming $\alpha_{ij} \neq 0$. We point out that $\beta$ is a matrix with $nm$ entries, and the condition in \eqref{eq:scatteredInterpolationConstraint} constrains exactly $k$ of these $nm$ entries in order to enforce interpolation on the $k$ points in $\bI$. In order to clearly separate the constrained and unconstrained entries of $\beta$, we introduce the matrices $\cS_{\bc} \in \R^{k \times nm}$ and $\cS_{\bu} \in \R^{(nm - k) \times nm}$ that allow for extracting the $k$ constrained and $nm - k$ unconstrained entries of $\beta$, respectively. A precise definition of these matrices is provided in Appendix~\ref{sec:selectiontwovar}. The vector of the $k$ constrained entries of $\beta$ reads
\begin{equation*}
    \beta_{\bc} = \cS_{\bc} \vectorize(\beta) \in \C^{k}
\end{equation*}
while the $nm-k$ unconstrained entries are arranged in the vector
\begin{equation*}
    \beta_{\bu} = \cS_{\bu} \vectorize(\beta) \in \C^{nm - k}.
\end{equation*}
Hence, the matrices $\cS_{\bc}$ and $\cS_{\bu}$ serve as ``indexing'' or ``selection'' operators that allow for extracting the constrained and unconstrained entries of $\vectorize(\beta)$. We can reconstruct $\beta$ via the relation
\begin{equation}
    \label{eq:betaReconstruction}
    \vectorize(\beta) = \cS_{\bc}^\top \beta_{\bc} + \cS_{\bu}^\top \beta_{\bu}.
\end{equation}
Based on these definitions we define our admissible set of rational functions with scattered interpolation constraints as
\begin{equation}
    \label{eq:scatteredAdmissibleFunctions}
    \cR_{SI} = \left\{ \frac{\bn}{\bd} \; \Big| \; \alpha \in \C^{n\times m}, \; \beta_{\bu} \in \C^{nm - k}, \; \beta_{\bc} = \alpha_{\bc} \circ \bH \; \text{and} \; \fronorm{\alpha}^2 + \lVert\beta_{\bu}
   \rVert^2 = 1  \right\},
\end{equation}
where $\bH = \left[ \bff(\Lambda_1,\Mu_1),\ldots,\bff(\Lambda_k,\Mu_k) \right]^\top \in \C^{k}$ and $\alpha_{\bc} = \cS_{\bc} \vectorize(\alpha)$. Further, we introduce the diagonal matrix $\cH = \diag(\cS_{\bc}^\top \bH) \in \C^{nm \times nm}$ which leads to our main result. 
\begin{theorem}
    \label{theorem:scatteredInterpolationLS}
    Consider the scattered sampling points $\bS$ in $\eqref{eq:scatteredSamplingPoints}$, the corresponding sample data $\bD$ in \eqref{eq:samples}, fixed sets of nodes $\blambda$ and $\bmu$ as in \eqref{eq:barycentricNodes}, the interpolation set $\bI$ in \eqref{eq:scatteredInterpolationPoints}, and the set of admissible rational functions $\cR_{SI}$ in \eqref{eq:scatteredAdmissibleFunctions}. Then we can write
    \begin{equation}
    \label{eq:scatteredInterpolationLS}
    \min_{\bn / \bd \in \cR_{SI}} \sum_{i=1}^{K} \left\lvert \bff(X_i,Y_i)\bd(X_i,Y_i) - \bn(X_i,Y_i)\right\rvert^2 = \min_{\fronorm{\alpha}^2 + \lVert\beta_{\bu}
   \rVert^2 = 1 } \left\lVert \M \begin{bmatrix}
        \vectorize(\alpha) \\ \beta_{\bu}
    \end{bmatrix} \right\rVert_2^2,
\end{equation}
where $\M \in \C^{K \times (2nm - k)}$ is given by
\begin{equation}
    \label{eq:scatteredInterpolationLSMatrix}
    \M = \begin{bmatrix}
        \cD \left( \cC_{\blambda}(\bX) \odot \cC_{\bmu}(\bY) \right)^\top - \left( \cC_{\blambda}(\bX) \odot \cC_{\bmu}(\bY) \right)^\top \cH, & -\left( \cC_{\blambda}(\bX) \odot \cC_{\bmu}(\bY) \right)^\top \cS_{\bu}^\top
    \end{bmatrix}.
\end{equation}
\end{theorem}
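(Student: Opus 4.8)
The plan is to collapse the sum into a single Euclidean norm, and then use the interpolation constraint to eliminate the constrained entries of $\beta$, so that the free optimization variable becomes the stacked vector $[\vectorize(\alpha)^\top, \beta_{\bu}^\top]^\top$. First I would record the pointwise evaluation identities. Reading off \eqref{eq:nd}, for each sampling point $(X_\ell,Y_\ell)$ we have
\[
\bd(X_\ell,Y_\ell) = \sum_{i=1}^n\sum_{j=1}^m \alpha_{ij}\, g_{\blambda}^{(i)}(X_\ell)\,g_{\bmu}^{(j)}(Y_\ell), \qquad \bn(X_\ell,Y_\ell) = \sum_{i=1}^n\sum_{j=1}^m \beta_{ij}\, g_{\blambda}^{(i)}(X_\ell)\,g_{\bmu}^{(j)}(Y_\ell).
\]
The $\ell$-th column of $\cC_{\blambda}(\bX)\odot\cC_{\bmu}(\bY)$ is, by definition of the Khatri--Rao product, the Kronecker product of the $\ell$-th columns of $\cC_{\blambda}(\bX)$ and $\cC_{\bmu}(\bY)$, whose entries are exactly the products $g_{\blambda}^{(i)}(X_\ell)g_{\bmu}^{(j)}(Y_\ell)$. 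Writing $\Phi = (\cC_{\blambda}(\bX)\odot\cC_{\bmu}(\bY))^\top \in \C^{K\times nm}$, the two sums above are the $\ell$-th entries of $\Phi\vectorize(\alpha)$ and $\Phi\vectorize(\beta)$, so that the objective equals $\left\lVert \cD\,\Phi\vectorize(\alpha) - \Phi\vectorize(\beta)\right\rVert_2^2$. This holds for arbitrary coefficients and is precisely the computation underlying \eqref{eq:scatteredLinearLS}.

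Next I would eliminate $\beta$ via the constraint. Combining \eqref{eq:betaReconstruction} with $\beta_{\bc} = \alpha_{\bc}\circ\bH$ gives $\vectorize(\beta) = \cS_{\bc}^\top(\alpha_{\bc}\circ\bH) + \cS_{\bu}^\top\beta_{\bu}$, so the whole step hinges on the identity $\cS_{\bc}^\top(\alpha_{\bc}\circ\bH) = \cH\,\vectorize(\alpha)$. Since $\alpha_{\bc}\circ\bH = \diag(\bH)\,\cS_{\bc}\vectorize(\alpha)$, this reduces to showing $\cS_{\bc}^\top\diag(\bH)\,\cS_{\bc} = \cH = \diag(\cS_{\bc}^\top\bH)$, which I would verify directly from the fact that $\cS_{\bc}$ is a $0/1$ row-selection matrix: the $(q,r)$ entry of $\cS_{\bc}^\top\diag(\bH)\cS_{\bc}$ is nonzero only when $q=r$ coincides with a selected index $p_s$, where it equals $\bH_s$, matching the diagonal of $\cS_{\bc}^\top\bH$. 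Hence $\vectorize(\beta) = \cH\,\vectorize(\alpha) + \cS_{\bu}^\top\beta_{\bu}$.

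Substituting this relation into the norm from the first step yields
\[
\left\lVert (\cD\Phi - \Phi\cH)\vectorize(\alpha) - \Phi\,\cS_{\bu}^\top\beta_{\bu}\right\rVert_2^2 = \left\lVert \begin{bmatrix}\cD\Phi - \Phi\cH, & -\Phi\,\cS_{\bu}^\top\end{bmatrix}\begin{bmatrix}\vectorize(\alpha)\\ \beta_{\bu}\end{bmatrix}\right\rVert_2^2,
\]
and unfolding $\Phi = (\cC_{\blambda}(\bX)\odot\cC_{\bmu}(\bY))^\top$ recovers exactly the matrix $\M$ of \eqref{eq:scatteredInterpolationLSMatrix}. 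Finally, I would match the feasible sets: since $\fronorm{\alpha}=\lVert\vectorize(\alpha)\rVert_2$, the defining normalization of $\cR_{SI}$ reads $\fronorm{\alpha}^2 + \lVert\beta_{\bu}\rVert^2 = \lVert\vectorize(\alpha)\rVert_2^2 + \lVert\beta_{\bu}\rVert_2^2 = \left\lVert[\vectorize(\alpha)^\top,\beta_{\bu}^\top]^\top\right\rVert_2^2 = 1$, so the constraint $\bn/\bd\in\cR_{SI}$ is the unit-norm constraint on the stacked variable, completing the equality of the two minimization problems.

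I expect the only genuine obstacle to be the selection-matrix bookkeeping in the middle step, namely confirming that Hadamard-scaling the extracted entries $\alpha_{\bc}$ by $\bH$ and scattering them back through $\cS_{\bc}^\top$ coincides with the diagonal action $\cH\vectorize(\alpha)$, together with keeping the Khatri--Rao and vectorization index orderings consistent throughout. Once that identity is pinned down, the remaining manipulations are routine linear algebra.
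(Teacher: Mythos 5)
Your proposal is correct and follows essentially the same route as the paper's own proof: expressing the evaluations via the transposed Khatri--Rao matrix, eliminating $\beta$ through $\vectorize(\beta) = \cH\vectorize(\alpha) + \cS_{\bu}^\top\beta_{\bu}$ using the identity $\cS_{\bc}^\top(\alpha_{\bc}\circ\bH) = \diag(\cS_{\bc}^\top\bH)\vectorize(\alpha)$, and assembling the block matrix $\M$. Your explicit verification of the selection-matrix identity and of the normalization-constraint correspondence is slightly more detailed than the paper's, but the argument is the same.
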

\begin{proof}
    First, note that we can write the barycentric numerator and denominator introduced in \eqref{eq:nd} as
    \begin{equation*}
        \bd(X_i,Y_i) = \left[\cC_{\blambda}(X_i) \otimes \cC_{\bmu}(Y_i)\right]^\top \vectorize(\alpha) \quad \text{and} \quad \bn(X_i,Y_i) = \left[\cC_{\blambda}(X_i) \otimes \cC_{\bmu}(Y_i)\right]^\top \vectorize(\beta).
    \end{equation*}
    Plugging these expressions into the objective function of the optimization problem on the left-hand side of \eqref{eq:scatteredInterpolationLS} gives
    \begin{align}
        \sum_{i=1}^{K} &\left\lvert \bff(X_i,Y_i) \left[\cC_{\blambda}(X_i) \otimes \cC_{\bmu}(Y_i)\right]^\top \vectorize(\alpha) - \left[\cC_{\blambda}(X_i) \otimes \cC_{\bmu}(Y_i)\right]^\top \vectorize(\beta) \right\rvert^2 \nonumber \\
        & = \left\lVert \cD \left[ \cC_{\blambda}(\bX) \odot \cC_{\bmu}(\bY) \right]^\top \vectorize(\alpha) - \left[ \cC_{\blambda}(\bX) \odot \cC_{\bmu}(\bY) \right]^\top \vectorize(\beta) \right\rVert_2^2. \label{eq:proofLSFormula}
    \end{align}
    Next, we incorporate the interpolation constraint $\beta_{\bc} = \alpha_{\bc} \circ \bH$ by writing
    \begin{equation*}
        \vectorize(\beta) = \cS_{\bc}^\top \beta_{\bc} + \cS_{\bu}^\top \beta_{\bu} = \cS_{\bc}^\top (\alpha_{\bc} \circ \bH) + \cS_{\bu}^\top \beta_{\bu}.
    \end{equation*}
    Then using the fact that
    \begin{equation*}
        \cS_{\bc}^\top (\alpha_{\bc} \circ \bH) = \cS_{\bc}^\top \diag(\bH) \alpha_{\bc} = \diag(\cS_{\bc}^\top \bH) \vectorize(\alpha)
    \end{equation*}
    and recalling that we defined $\cH = \diag(\cS_{\bc}^\top \bH)$, we obtain
    \begin{equation*}
        \vectorize(\beta) = \cH \vectorize(\alpha) + \cS_{\bu}^\top \beta_{\bu}.
    \end{equation*}
    Plugging this expression into \eqref{eq:proofLSFormula} gives
    \begin{align*}
        &\left\lVert \cD \left[ \cC_{\blambda}(\bX) \odot \cC_{\bmu}(\bY) \right]^\top \vectorize(\alpha) - \left[ \cC_{\blambda}(\bX) \odot \cC_{\bmu}(\bY) \right]^\top \left[ \cH \vectorize(\alpha) + \cS_{\bu}^\top \beta_{\bu} \right] \right\rVert_2^2 \\
        =& \left\lVert \left( \cD \left[ \cC_{\blambda}(\bX) \odot \cC_{\bmu}(\bY) \right]^\top - \left[ \cC_{\blambda}(\bX) \odot \cC_{\bmu}(\bY) \right]^\top \cH \right) \vectorize(\alpha) - \left[ \cC_{\blambda}(\bX) \odot \cC_{\bmu}(\bY) \right]^\top \cS_{\bu}^\top \beta_{\bu} \right\rVert_2^2 \\
        =& \left\lVert \begin{bmatrix}
        \cD \left( \cC_{\blambda}(\bX) \odot \cC_{\bmu}(\bY) \right)^\top - \left( \cC_{\blambda}(\bX) \odot \cC_{\bmu}(\bY) \right)^\top \cH, & -\left( \cC_{\blambda}(\bX) \odot \cC_{\bmu}(\bY) \right)^\top \cS_{\bu}^\top
        \end{bmatrix} \begin{bmatrix}
        \vectorize(\alpha) \\ \beta_{\bu}
        \end{bmatrix}\right\rVert_2^2 \\
        =& \left\lVert \M \begin{bmatrix}
        \vectorize(\alpha) \\ \beta_{\bu}
        \end{bmatrix} \right\rVert_2^2.
    \end{align*}
\end{proof}
As with all previously discussed linear LS problems, \eqref{eq:scatteredInterpolationLS} admits a closed form solution in terms of the right singular vector of $\M$ associated with its smallest singular value. Note that this solution will automatically satisfy
\begin{equation*}
    \left\lVert \begin{bmatrix}
        \vectorize(\alpha) \\ \beta_{\bu}
    \end{bmatrix} \right\rVert_2^2 =  \fronorm{\alpha}^2 + \lVert\beta_{\bu}
   \rVert^2 = 1,
\end{equation*}
which motivates our previously introduced normalization constraint.

We emphasize that the optimization problem in \eqref{eq:scatteredInterpolationLS} is a generalization of all previously derived LS problems. To recover the LS problem in \eqref{eq:scatteredLinearLS} (and similarly \eqref{eq:linearLS}) we assume that no interpolation conditions are enforced. In this case, the matrix $\cS_{\bu}$ will be the identity matrix and $\cH$ will ``disappear'', resulting in the LS matrix $\M$ in \eqref{eq:scatteredInterpolationLSMatrix} that matches the matrix in \eqref{eq:scatteredLSMatrix}. To recover \eqref{eq:interpolatoryLinearLS} we enforce interpolation on the entire grid $\blambda \times \bmu$. In this case we have that $\cS_{\bu}$ ``disappears'', $\cS_{\bc}$ is the identity matrix and thus $\cH = \diag(\vectorize(\H))$. It is easily verified that $\M$ will coincide with $\L_2$ introduced in \eqref{eq:interpolationLSMatrix} when these conditions are met. After an illustrative example, we investigate how the derived LS problem yields a formulation of the p-AAA algorithm that operates on scattered data sets in the next section.
\begin{example}
Here, we explicitly calculate the previously introduced quantities for a small example. Consider the function
\begin{equation*}
    \bff(x,y) = \frac{x^2 + xy + y + 1}{x + y + 5}.
\end{equation*}
We choose the scattered sampling points
\begin{align*}
(X_1,Y_1)&=(-2,-2), & (X_2,Y_2)&=(-2,1), & (X_3,Y_3)&=(-1,1), \\
(X_4,Y_4)&=(-1,2), & (X_5,Y_5)&=(0,-1), & (X_6,Y_6)&=(0,2), \\
(X_7,Y_7)&=(1,-1), & (X_8,Y_8)&=(2,-2), & (X_9,Y_9)&=(2,2),
\end{align*}
forming the set $\bS$. The associated samples forming the set $\bD$ are given by
\begin{align*}
f(X_1,Y_1)&=7, & f(X_2,Y_2)&=1, & f(X_3,Y_3)&=\tfrac{2}{5}, \\
f(X_4,Y_4)&=\tfrac{1}{3}, & f(X_5,Y_5)&=0, & f(X_6,Y_6)&=\tfrac{3}{7}, \\
f(X_7,Y_7)&=0, & f(X_8,Y_8)&=-\tfrac{1}{5}, & f(X_9,Y_9)&=\tfrac{11}{9}.
\end{align*}
The interpolation points
\begin{equation*}
    (\Lambda_1,\Mu_1) = (-1,2), \qquad (\Lambda_2,\Mu_2) = (1,-1),
\end{equation*}
form the set $\bI$. For the barycentric nodes we choose
\begin{equation*}
    \lambda_1 = -1, \quad \lambda_2 = 1, \quad \mu_1 = -1, \quad \mu_2 = 2,
\end{equation*}
which results in the set of barycentric nodes given by
\begin{equation*}
    \blambda \times \bmu = \left\{ (-1,-1), (-1,2), (1,-1), (1,2) \right\}.
\end{equation*}
All of these quantities are visualized in the left-most and middle subplot of Figure~\ref{fig:simpleCalculation}. Based on the given set of barycentric nodes, sampling, and interpolation data, we would like to approximate $\bff$ with the rational function 
\begin{equation*}
    \br(x,y) =
    \frac{
        \frac{\beta_{11}}{(x+1)(y+1)} + \frac{\beta_{12}}{(x+1)(y-2)} + \frac{\beta_{21}}{(x-1)(y+1)} + \frac{\beta_{22}}{(x-1)(y-2)}
    }{
        \frac{\alpha_{11}}{(x+1)(y+1)} + \frac{\alpha_{12}}{(x+1)(y-2)} + \frac{\alpha_{21}}{(x-1)(y+1)} + \frac{\alpha_{22}}{(x-1)(y-2)}
    }
\end{equation*}
via the optimization problem in \eqref{eq:scatteredInterpolationLS}. In particular, we want $\br$ to interpolate $\bff$ on $\bI$ such that $\br(-1,2) = \bff(-1,2)$ and $\br(1,-1) = \bff(1,-1)$. To construct the matrix $\M$ we first need to put together several intermediate quantities. First, the masking matrices $\cS_{\bc}$ and $\cS_{\bu}$ are given by
\begin{equation*}
    \cS_{\bc} = 
    \begin{bmatrix}
        0 & 1 & 0 & 0 \\
        0 & 0 & 1 & 0
    \end{bmatrix}
    \qquad \text{and} \qquad
    \cS_{\bu} =
    \begin{bmatrix}
        1 & 0 & 0 & 0 \\
        0 & 0 & 0 & 1
    \end{bmatrix}.
\end{equation*}
Hence, the entries of $\beta$ that are constrained by the interpolation condition are given by
\begin{equation*}
    \beta_{\bc} = \begin{bmatrix}
        \beta_{12} \\ \beta_{21}
    \end{bmatrix} = \begin{bmatrix}
        0 & 1 & 0 & 0 \\
        0 & 0 & 1 & 0
    \end{bmatrix} \begin{bmatrix}
        \beta_{11} \\ \beta_{12} \\ \beta_{21} \\ \beta_{22}
    \end{bmatrix} = \cS_{\bc} \vectorize(\beta).
\end{equation*}
Similarly, the entries of $\beta$ that are the unconstrained optimization variables in the LS problem are given by
\begin{equation*}
    \beta_{\bu} = \begin{bmatrix}
        \beta_{11} \\ \beta_{22}
    \end{bmatrix} = \begin{bmatrix}
        1 & 0 & 0 & 0 \\
        0 & 0 & 0 & 1
    \end{bmatrix} \begin{bmatrix}
        \beta_{11} \\ \beta_{12} \\ \beta_{21} \\ \beta_{22}
    \end{bmatrix} = \cS_{\bu} \vectorize(\beta).
\end{equation*}
\begin{figure}
\centering
\includegraphics{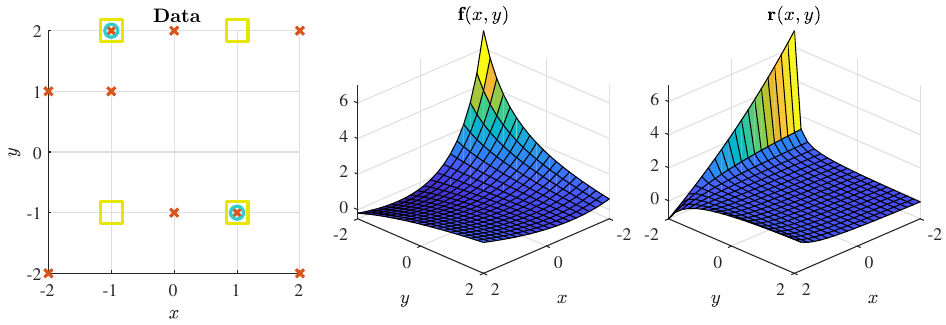}
\caption{Visualization of data and approximation for the calculated example. The left subfigure shows sampling points (red crosses), barycentric nodes (yellow squares) and interpolation points (cyan circles). The function $\bff$ and the approximation $\br$ are shown in the middle and right subfigure, respectively.}
\label{fig:simpleCalculation}
\end{figure}
Next, $\bX$ and $\bY$ are given by
\begin{align*}
    \bX &= \left[ X_1, X_2, X_3, X_4, X_5, X_6, X_7, X_8, X_9 \right]^\top
        = \left[ -2,\ -2,\ -1,\ -1,\ 0,\ 0,\ 1,\ 2,\ 2 \right]^\top, \\
    \bY &= \left[ Y_1, Y_2, Y_3, Y_4, Y_5, Y_6, Y_7, Y_8, Y_9 \right]^\top
        = \left[ -2,\ 1,\ 1,\ 2,\ -1,\ 2,\ -1,\ -2,\ 2 \right]^\top.
\end{align*}
This leads to the matrices
\begin{align*} \renewcommand{\arraystretch}{1.2}
\cC_{\blambda}(\bX) &= 
\begin{bmatrix}
-1 & -1 & 1 & 1 & 1 & 1 & 0 & \tfrac{1}{3} & \tfrac{1}{3} \\
-\tfrac{1}{3} & -\tfrac{1}{3} & 0 & 0 & -1 & -1 & 1 & 1 & 1
\end{bmatrix}, \\
\cC_{\bmu}(\bY) &= 
\begin{bmatrix}
-1 & \tfrac{1}{2} & \tfrac{1}{2} & 0 & 1 & 0 & 1 & -1 & 0 \\
-\tfrac{1}{4} & -1 & -1 & 1 & 0 & 1 & 0 & -\tfrac{1}{4} & 1
\end{bmatrix},
\end{align*}
and
\begin{equation*} \renewcommand{\arraystretch}{1.2}
\cC_{\blambda}(\bX) \odot \cC_{\bmu}(\bY) =
\begin{bmatrix}
1 & -\tfrac{1}{2} & \tfrac{1}{2} & 0 & 1 & 0 & 0 & -\tfrac{1}{3} & 0 \\
\tfrac{1}{4} & 1 & -1 & 1 & 0 & 1 & 0 & -\tfrac{1}{12} & \tfrac{1}{3} \\
\tfrac{1}{3} & -\tfrac{1}{6} & 0 & 0 & -1 & 0 & 1 & -1 & 0 \\
\tfrac{1}{12} & \tfrac{1}{3} & 0 & 0 & 0 & -1 & 0 & -\tfrac{1}{4} & 1
\end{bmatrix}.
\end{equation*}
The matrix $\cD$ is a diagonal matrix formed by the samples
\begin{equation*}
    \cD = \diag(\bff(X_1,Y_1),\bff(X_2,Y_2), \ldots, \bff(X_9,y_9)) = \diag(7,1,\ldots,\tfrac{11}{9})
\end{equation*}
and $\cH$ is given by
\begin{equation*}
    \cH = \diag\left(\cS_{\bc}^\top \begin{bmatrix}
        \bff(\Lambda_1,\Mu_1) \\ \bff(\Lambda_2,\Mu_2)
    \end{bmatrix}\right) = \diag(0, \tfrac{1}{3}, 0, 0).
\end{equation*}
Finally, we can construct the LS matrix in \eqref{eq:scatteredInterpolationLSMatrix} which reads
\begin{equation*}
\renewcommand{\arraystretch}{1.2}
\M = \begin{bmatrix}
7 & \tfrac{5}{3} & \tfrac{7}{3} & \tfrac{7}{12} & -1 & -\tfrac{1}{12} \\
-\tfrac{1}{2} & \tfrac{2}{3} & -\tfrac{1}{6} & \tfrac{1}{3} & \tfrac{1}{2} & -\tfrac{1}{3} \\
\tfrac{1}{5} & -\tfrac{1}{15} & 0 & 0 & -\tfrac{1}{2} & 0 \\
0 & 0 & 0 & 0 & 0 & 0 \\
0 & 0 & 0 & 0 & -1 & 0 \\
0 & \tfrac{2}{21} & 0 & -\tfrac{3}{7} & 0 & 1 \\
0 & 0 & 0 & 0 & 0 & 0 \\
\tfrac{1}{15} & \tfrac{2}{45} & \tfrac{1}{5} & \tfrac{1}{20} & \tfrac{1}{3} & \tfrac{1}{4} \\
0 & \tfrac{8}{27} & 0 & \tfrac{11}{9} & 0 & -1
\end{bmatrix}.
\end{equation*}
Note that $\M$ has two zero rows that arise due to the two interpolation points whose contribution to the LS error is zero. Computing the singular vector corresponding to the smallest singular value of $\M$ gives the following solution to the LS problem
\begin{equation*}
\left[\alpha_{11}, \alpha_{12}, \alpha_{21}, \alpha_{22}, \beta_{11}, \beta_{22}\right]^\top
\approx
\left[-0.3222, 0.0633, 0.9246, -0.1376, -0.0624, -0.1200\right]^\top.
\end{equation*}
Based on this, we construct the constrained $\beta$ coefficients via
\begin{equation*}
    \beta_{\bc} = \begin{bmatrix}
        \beta_{12} \\ \beta_{21}
    \end{bmatrix} = \begin{bmatrix}
        \alpha_{12} \\ \alpha_{21}
    \end{bmatrix} \circ \begin{bmatrix}
        \bff(-1,2) \\ \bff(1,-1)
    \end{bmatrix}\approx \begin{bmatrix}
        0.0633 \\ 0.9246
    \end{bmatrix} \circ \begin{bmatrix}
        \tfrac{1}{3} \\ 0
    \end{bmatrix} = \begin{bmatrix}
         0.0211 \\ 0
    \end{bmatrix}.
\end{equation*}
Hence, the full numerator coefficients and denominator coefficients read
\begin{equation*}
    \begin{bmatrix}
        \beta_{11} \\ \beta_{12} \\ \beta_{21} \\ \beta_{22}
    \end{bmatrix} = \cS_{\bc}^\top \beta_{\bc} + \cS_{\bu}^\top \beta_{\bu} = \begin{bmatrix}
        0 \\ \beta_{12} \\ \beta_{21} \\ 0
    \end{bmatrix} + \begin{bmatrix}
        \beta_{11} \\ 0 \\ 0 \\ \beta_{22}
    \end{bmatrix} \approx \begin{bmatrix}
        -0.0624 \\ 0.0211 \\ 0 \\ -0.1200
    \end{bmatrix}
\end{equation*}
and
\begin{equation*}
    \begin{bmatrix}
        \alpha_{11} \\ \alpha_{12} \\ \alpha_{21} \\ \alpha_{22}
    \end{bmatrix} \approx \begin{bmatrix}
        -0.3222 \\ 0.0633 \\ 0.9246 \\ -0.1376
    \end{bmatrix}.
\end{equation*}
Based on the barycentric coefficients and previously specified barycentric nodes, we can construct the rational function $\br$. For this example $\br$ is depicted in the right-most subplot of Figure~\ref{fig:simpleCalculation}.
\end{example}

\section{p-AAA with scattered data sets}
\label{sec:scatteredPAAA}
In this section we leverage Theorem~\ref{theorem:scatteredInterpolationLS} and extend the original grid-data p-AAA  as given in Algorithm~\ref{alg:paaa} to scattered data sets. We begin with the bivariate case, and extend the discussion to functions that depend on $d > 2$ variables afterwards.

\subsection{Scattered p-AAA for bivariate approximation}

As in the original p-AAA formulation, we follow a greedy procedure for selecting the nodes $\blambda$ and $\bmu$ and solve the LS problem in \eqref{eq:scatteredInterpolationLS} to compute the barycentric coefficients in p-AAA with scattered data. Similar to the original algorithm, the greedy procedure selects new barycentric nodes via
\begin{equation}
    \label{eq:nonGridGreedy}
    (\lambda_*,\mu_*) = \argmax_{(x,y) \in \bS} \lvert \bff(x,y) - \br(x,y) \rvert.
\end{equation}
While the original algorithm enforces interpolation on the grid $\blambda \times \bmu$ (and thus the  barycentric coefficients $\beta$ are completely determined by $\alpha$ via \eqref{eq:betaConstraint}), we have now more flexibility in choosing the set of interpolation nodes $\bI$ based on our previous discussion. Two settings for updating $\bI$ in the p-AAA algorithm could be considered:
\begin{enumerate}
    \item In each iteration add all points in the set $\bS \cap (\blambda \times \bmu)$ to $\bI$.
    \item In each iteration add only $(\lambda_*,\mu_*)$ to $\bI$.
\end{enumerate}
The first update strategy for $\bI$ is very similar to the original p-AAA algorithm, in the sense that we interpolate data at all combinations of barycentric nodes where we have sample data available. It guarantees that in each iteration of the algorithm a new node will be added to $\blambda$ or $\bmu$, and that $\bI$ increases by one or multiple values. The second update strategy only adds individual interpolation points to $\bI$. In our numerical experiments we did not find any examples where the second update strategy lead to notably better rational approximants than the first one for the same resulting order. Additionally, using a larger interpolation set $\bI$ leads to less degrees of freedom in the LS problem in \eqref{eq:scatteredInterpolationLS}. Therefore, the first update strategy allows for solving the LS problem faster as we typically add several points at once to $\bI$ in this case. For these reasons we specifically use the updating strategy 1. in our proposed p-AAA variation for scattered data sets which is summarized in Algorithm~\ref{alg:nonGridpaaa}.
\begin{algorithm}[t]
    \caption{Scattered data p-AAA}\label{alg:nonGridpaaa}
    \begin{algorithmic}[1]
        \Require{Set of sampling points $\bS$, and samples $\bD$}
        \Ensure{$\br \approx \bff$}
        \State{Initialize $\br \equiv \operatorname{mean}(\bF)$}
        \State{$\blambda,\bmu,\bI \gets \emptyset$}
        \While{error $>$ desired tolerance}
        \State{Determine $ \left( \lambda_*, \mu_* \right) $ via the greedy selection in \eqref{eq:nonGridGreedy}}
        \State Update the sets of barycentric nodes:
        \State\hspace{\algorithmicindent} $\blambda \gets \blambda \cup \{ \lambda_* \}$
        \State\hspace{\algorithmicindent} $\bmu \gets \bmu \cup \{ \mu_* \}$
        \State Update the interpolation points:
        \State\hspace{\algorithmicindent} $\bI \gets \bI \cup \left( \bS \cap \left( \blambda \times \bmu \right) \right)$
        \State{Solve the LS problem in \eqref{eq:scatteredInterpolationLS} for $\alpha, \beta_{\bu}$}
        \State{Compute $\beta_{\bc} = \alpha_{\bc} \circ \bH$ and update $\beta$ via \eqref{eq:betaReconstruction}}
        \State{Update the rational approximant $\br$ via $\alpha, \beta, \blambda$ and $\bmu$}
        \State{error $ \gets \max_{i} \; \left\lvert \bff(X_i,Y_i) - \br\left(X_i,Y_i\right) \right\vert / \max_{i} \;  \left\lvert \bff(X_i,Y_i) \right\rvert $}
        \EndWhile
    \end{algorithmic}
\end{algorithm}

\subsection{Extension to multiple variables}
\label{sec:multiplevariables}
So far, we have considered rational approximation of a function in two variables in order to allow for a more clear and concise presentation of our results. Now, we discuss how the results in Section~\ref{sec:scatteredLSInterpolation} and Section~\ref{sec:scatteredPAAA} can be extended to the case where $\bff$ is a function of $d$ variables $\bz = (z^{(1)},\ldots,z^{(d)}) \in \C^d$. First, we introduce the set of scattered sampling points
\begin{equation*}
    \bS = \left\{ (Z_1^{(1)},\ldots,Z_1^{(d)}), \ldots, (Z_K^{(1)},\ldots,Z_K^{(d)})\right\} \subset \C^d,
\end{equation*}
and the associated set of samples
\begin{equation*}
    \bD = \left\{ \bff\left(z^{(1)},\ldots,z^{(d)}\right) \; | \; (z^{(1)},\ldots,z^{(d)}) \in \bS \right\} \subset \C.
\end{equation*}
Next, consider the set of barycentric nodes
\begin{equation*}
    \blambda^{(1)} \times \cdots \times \blambda^{(d)} \subset \C^d
\end{equation*}
with
\begin{equation*}
    \blambda^{(j)} = \left\{ \lambda_1^{(j)}, \ldots, \lambda_{n_j}^{(j)} \right\} \subset \C \quad \text{for} \quad j = 1,\ldots,d.
\end{equation*}
The barycentric form of a $d$-variate rational function $\br$ is defined along the lines of $\eqref{eq:nd}$ where now we have
\begin{align*}
    \bn(z^{(1)},\ldots,z^{(d)}) &= \sum_{i_1=1}^{n_1} \cdots \sum_{i_d=1}^{n_d} \beta_{i_1\cdots i_d} g_{\blambda^{(1)}}^{(i_1)}(z^{(1)}) \cdots g_{\blambda^{(d)}}^{(i_d)}(z^{(d)}), \\
    \bd(z^{(1)},\ldots,z^{(d)}) &= \sum_{i_1=1}^{n_1} \cdots \sum_{i_d=1}^{n_d} \alpha_{i_1\cdots i_d} g_{\blambda^{(1)}}^{(i_1)}(z^{(1)}) \cdots g_{\blambda^{(d)}}^{(i_d)}(z^{(d)}),
\end{align*}
and $\alpha,\beta \in \C^{n_1 \times \cdots \times n_d}$ are formed by order-$d$ tensors. The set of interpolation points is given by
\begin{equation*}
    \bI = \left\{ (\Lambda_1^{(1)},\ldots,\Lambda_1^{(d)}), \ldots, (\Lambda_k^{(1)},\ldots,\Lambda_k^{(d)})\right\} \subset \blambda^{(1)} \times \cdots \times \blambda^{(d)}.
\end{equation*}
In the following, our goal is to arrive at an optimization problem along the lines of \eqref{eq:scatteredInterpolationLS} for the $d$-variable case. As in the two-variable case, 
\begin{equation*}
    \beta_{\bc} = \cS_{\bc} \vectorize(\beta) \in \C^{k} \quad \text{and} \quad \beta_{\bu} = \cS_{\bu} \vectorize(\beta) \in \C^{n_1 \cdots n_d - k}
\end{equation*}
are formed by the constrained and unconstrained entries of $\beta$, respectively. The underlying matrices $\cS_{\bc} \in \C^{k \times n_1 \cdots n_d}$ and $\cS_{\bu} \in \C^{(n_1\cdots n_d - k) \times n_1 \cdots n_d}$ are defined in Appendix~\ref{sec:selectionmultivar}. This leads to the admissible set of rational functions
\begin{equation*}
    \cR_{SI} = \left\{ \frac{\bn}{\bd} \; \Big| \; \alpha \in \C^{n_1 \times \cdots \times n_d}, \; \beta_{\bu} \in \C^{n_1 \cdots n_d - k}, \; \beta_{\bc} = \alpha_{\bc} \circ \bH \; \text{and} \; \fronorm{\alpha}^2 + \lVert\beta_{\bu}
   \rVert^2 = 1  \right\},
\end{equation*}
where $\bH = \left[ \bff(\Lambda_1^{(1)},\ldots,\Lambda_1^{(d)}), \ldots, \bff(\Lambda_k^{(1)},\ldots,\Lambda_k^{(d)}) \right]^\top$ is the vector of interpolated samples. Finally, define the matrices
\begin{equation*}
    \cD = \diag\left(\bff(Z_1^{(1)},\ldots,Z_1^{(d)}),\ldots,\bff(Z_K^{(1)},\ldots,Z_K^{(d)})\right),
\end{equation*}
and
\begin{equation*}
    \cC = \cC_{\blambda^{(1)}}(\bZ^{(1)}) \odot \cdots \odot \cC_{\blambda^{(d)}}(\bZ^{(d)})
\end{equation*}
with
\begin{equation*}
    \bZ^{(j)} = \left[ Z_1^{(j)}, \ldots, Z_K^{(j)} \right]^\top \quad \text{for} \quad j = 1,\ldots,d. 
\end{equation*}
With these quantities in hand we arrive at the optimization problem
\begin{equation*}
    \min_{\bn / \bd \in \cR_{SI}} \sum_{i=1}^{K} \left\lvert \bff(Z_i^{(1)},\ldots,Z_i^{(d)})\bd(Z_i^{(1)},\ldots,Z_i^{(d)}) - \bn(Z_i^{(1)},\ldots,Z_i^{(d)})\right\rvert^2 = \min_{\fronorm{\alpha}^2 + \lVert\beta_{\bu}
   \rVert^2 = 1 } \left\lVert \M \begin{bmatrix}
        \vectorize(\alpha) \\ \beta_{\bu}
    \end{bmatrix} \right\rVert_2^2,
\end{equation*}
where
\begin{equation*}
    \M = \begin{bmatrix}
        \cD \cC^\top - \cC^\top \cH, & -\cC^\top \cS_{\bu}^\top
    \end{bmatrix} \in \C^{K \times (2 n_1 \cdots n_d - k)}.
\end{equation*}
Again, we can compute the solution to the derived optimization problem via an SVD of $\M$ and reconstruct the full tensor $\beta$ afterwards. As in the two-variable p-AAA formulation, we can select barycentric nodes via a greedy selection
\begin{equation*}
    (\lambda_*^{(1)}, \ldots, \lambda_*^{(d)}) = \argmax_{\bz \in \bS} \left\lvert \bff(\bz) - \br(\bz) \right\rvert
\end{equation*}
and update the interpolation set by adding the points in $\bS \cap (\blambda^{(1)} \times \cdots \times \blambda^{(d)})$ to $\bI$ in each iteration.

\begin{figure}[htp]
    \centering
    \includegraphics{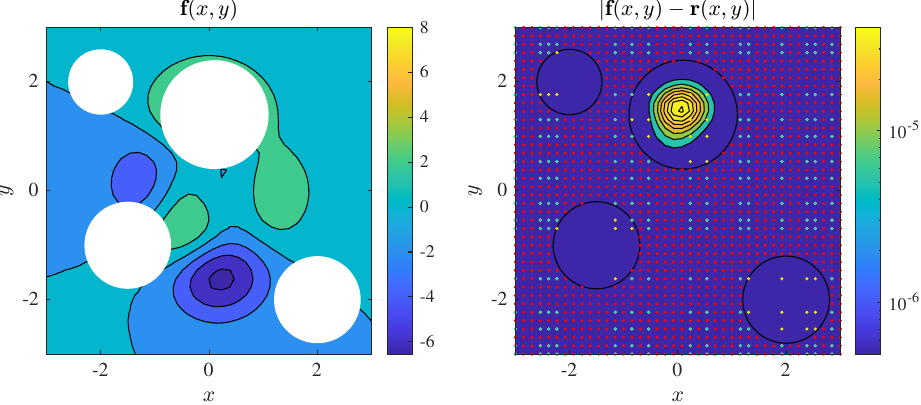}
    \caption{Algorithm~\ref{alg:nonGridpaaa} applied to the peaks function with several gaps as depicted in the left subfigure. On the right, we see the scattered p-AAA approximation error as well as sampling points (red dots), interpolated data (cyan dots) and barycentric nodes which are not interpolated (yellow dots).}
    \label{fig:paaaScatteredPeaks}
\end{figure} %

\section{Numerical examples}
\label{sec:numerics}
The goal of this section is to further illustrate how p-AAA on scattered data sets operates in comparison to the standard p-AAA algorithm. Additionally, we demonstrate that our proposed method is effective on challenging benchmark problems.

\subsection{Peaks function with gaps}
\label{sec:peaksGaps}
Here we reconsider the peaks function which we approximated with p-AAA in Example~\ref{ex:paaaPeaks}. In order to demonstrate the effectiveness of scattered p-AAA, we remove some data lying in circular subdomains from the sampling grid as illustrated in Figure~\ref{fig:paaaScatteredPeaks}. 
The setup discussed here was first introduced in \cite{nakatsukasa_applications_2025}, where several rational approximations computed by the univariate AAA algorithm \cite{nakatsukasa_aaa_2018} were employed to approximate the missing data in the gaps. This approach is fundamentally different from our approach here using scattered p-AAA in that we only need to execute p-AAA once to obtain a single approximant that takes all of the sample data into account at once. Overall, approximately $22\%$ of the data was removed compared to the setup in Example~\ref{ex:paaaPeaks}. After $18$ iterations, scattered p-AAA computes an order-$(14,15)$ rational approximant of $\bff$. As in the original p-AAA algorithm, the barycentric nodes form a grid. However, interpolation conditions are only enforced at points where sample data is available. In particular, the interpolation points do not form a grid. Note that none of the interpolation points are within the circular gaps for this example since data is not available there. As we would expect, the error is largest in the area where the least amount of data is available. Nonetheless, the approximation remains accurate overall, with a maximum error of less than $10^{-4}$ on \emph{the entire grid data} where the data from the removed circular subdomains are added for testing. 
Despite the large gap in the data, scattered p-AAA still produced an accurate rational approximant.

\subsection{Parametric transfer function}
Next, we consider approximation of a parametric transfer function originating from a model for a micro-electromechanical systems (MEMS) gyroscope \cite{morwiki_modgyro}. The transfer function depends on three variables: 1) The Laplace variable $s \in \C$, 2) a structural parameter $d \in [1,2]$, and 3) a parameter $\theta \in [10^{-7},10^{-5}]$ for the angular velocity of the device. The operating frequency range corresponds to $s \in [2\pi \times
0.025, 2\pi \times 0.25] \imath$ for this example. 

For our numerical computations, we sample the three variables of the transfer function on a grid with $200$ linearly spaced samples for the Laplace variable, $20$ linearly spaced samples for the parameter $d$, and $20$ logarithmically sampled values for the parameter $\theta$. Based on these grid data, we compute the original (grid data) p-AAA approximation. Then we remove all data associated with $20\%$, $40\%$, and $60\%$ of randomly chosen parameter combinations. Hence, we assume access to the same $200$ sampling points for the Laplace variable $s$, but the parameter samples for $\theta$ and $d$ form a scattered set. For all of these settings, we compute the scattered data p-AAA approximation and evaluate the approximation error on the full grid of available data. In Figure~\ref{fig:paaaScatteredGyro} we visualize the magnitude of the frequency response at the parameter combinations that resulted in the maximum approximation error. The left-most plot shows the frequency responses at the parameter values that led to the largest error for $20\%$ of the data removed, the middle plot for $40\%$, and the right-most plot for $60\%$. For all setups we ran p-AAA for scattered data with a convergence tolerance of $10^{-2}$ for the relative maximum error. We see that even for the case where $60\%$ of the data was removed, the approximation is still very accurate, as most of the dominant peaks are captured. The final orders of rational approximants were $(18,6,11)$ using full data, $(22,9,12)$ with $20\%$ of data removed, $(16,9,9)$ with $40\%$ of data removed, and $(14,8,8)$ with $60\%$ of data removed. In general, the approximation is worse at higher frequencies. This is primarily due to the fact that the p-AAA greedy selection is based on maximum and not relative maximum errors and the frequency response is rather small for large values of $s$. However, scattered p-AAA still manages to achieve a similar accuracy to p-AAA, even after $60\%$ of the sample data is removed.

\begin{figure}
    \centering
    \includegraphics{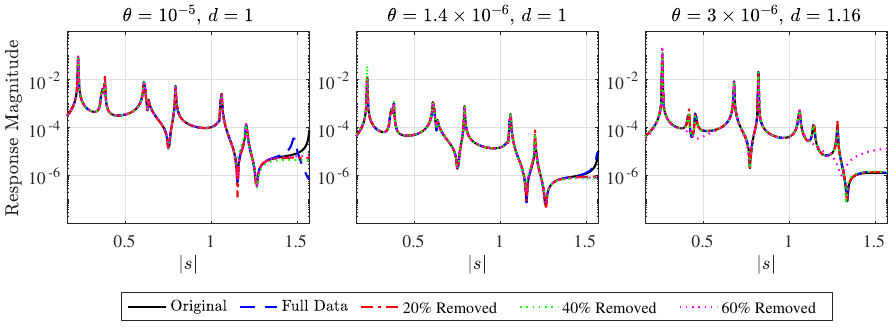}
    \caption{Approximations at parameter combinations with largest approximation error for 20\%, 40\% and 60\% of data removed.}
    \label{fig:paaaScatteredGyro}
\end{figure}

\subsection{Two-parameter thermal model}
In our final numerical example we consider a stationary version of the time-dependent thermal model introduced in \cite{rave_non-stationary_2021} (see \cite{balicki_multivariate_2025} for a description of the stationary problem). For this model the output is the average temperature in a square domain at the steady-state of the model, which depends on four parameters $(p^{(1)},p^{(2)},p^{(3)},p^{(4)}) \in \R^4$ that specify the heat conductivity on four disjoint subdomains. Here we use the fixed values $p^{(4)}=10^{-6}$ and $p^{(3)}=10^{2}$ and use $p^{(1)},p^{(2)} \in \left[ 10^{-6},10^2 \right]$. We randomly choose $K=50$ scattered sampling points in $\left[ 10^{-6},10^2 \right]^2$ to generate our training data and use Algorithm~\ref{alg:nonGridpaaa} to approximate the model output $\bff(x,y)$ where $x=p^{(1)}$ and $y=p^{(2)}$ via a bivariate rational function. Only three iterations were necessary to compute a highly accurate rational approximation of order $(2,2)$. The approximated function and sampling points are shown in the left plot of Figure~\ref{fig:paaaScatteredThermal}. The right plot displays the approximation error and the three selected interpolation points. The approximant was evaluated using the test data generated on a full $40 \times 40$ grid uniformly covering $[10^{-4},10^8]^2$. This shows that the approximation quality is high even at parameter values that differ from the training data by two orders of magnitude.

\begin{figure}
    \centering
    \includegraphics{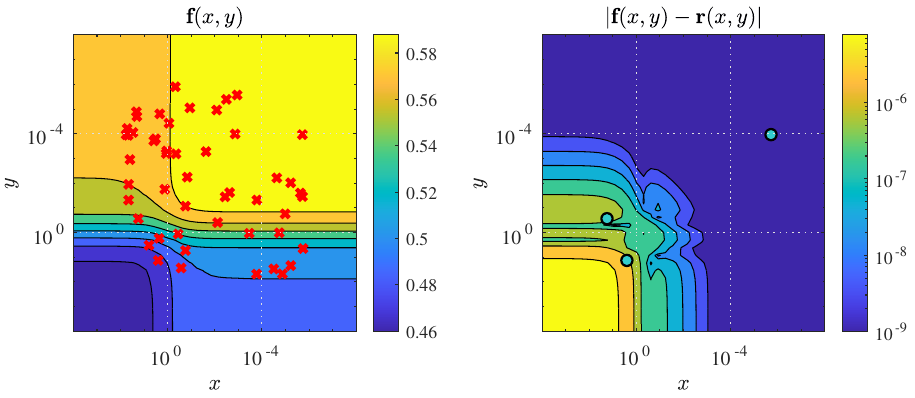}
    \caption{Approximation of a stationary thermal model with two parameters using $K=50$ sampling points (red crosses). Only three interpolation points (cyan dots) were selected by scattered p-AAA leading to an accurate rational approximation.}
    \label{fig:paaaScatteredThermal}
\end{figure}

\section{Conclusions}
\label{sec:conclusions}
We introduced several formulations of linear LS problems that can be used in multivariate rational approximation algorithms and describe how closed-form solutions via an SVD can be computed for them. Most importantly, this includes an optimization problem that incorporates interpolation conditions for scattered sets of interpolation points. This leads to a formulation of the p-AAA algorithm that operates on scattered data. We show that our proposed approach is effective via several numerical examples. While our work focuses on rational approximation, future research could investigate the exact recovery of a rational function from scattered data sets. The parametric Loewner framework \cite{ionita2014} achieves this using grid data, however, to our knowledge, extensions to scattered data have not been investigated yet. It is well-known from the theory of polynomial interpolation, that special types of scattered data sets (e.g., Padua points \cite{bos_bivariate_2006}) are required to guarantee the uniqueness of bivariate polynomial interpolants. Extending these considerations to bivariate rational functions would be an interesting next step.

\section*{Acknowledgments}
This work was supported in part by the National Science Foundation (NSF), United States under Grant No. DMS-2411141. We thank Nick Trefethen for sharing the code used to generate the data for the example in Section~\ref{sec:peaksGaps}, and Nick Trefethen, Yuji Nakatsukasa, and Heather Wilber for many insightful discussions regarding this example.

\bibliographystyle{plainurl}
\bibliography{references}

\appendix

\section{Selection operators}
Here we give precise definitions of the selection operators $\cS_{\bc}$ and $\cS_{\bu}$ used for scattered interpolation. We derive these operators for the case of two variables and then generalize the description to $d$ variables.
\subsection{Selection operators for two variables}
\label{sec:selectiontwovar}
First, we define the masking matrices $\bM_{\bc}, \bM_{\bu} \in \C^{n \times m}$ via
\begin{equation*}
    \left(\bM_{\bc}\right)_{ij} = \begin{cases}
        1 \quad & \text{if } (\lambda_i,\mu_j) \in \bI \\
        0 \quad & \text{else}
    \end{cases} \quad \text{and} \quad \left(\bM_{\bu}\right)_{ij} = \begin{cases}
        0 \quad & \text{if } (\lambda_i,\mu_j) \in \bI \\
        1 \quad & \text{else}
    \end{cases}
\end{equation*}
Next, we define the index sets
\begin{equation*}
    \begin{aligned}
    \{ \bc_1,\ldots,\bc_k \} &= \{ i \in \N \, | \, \vectorize(\bM_{\bc})_i = 1 \}, \\ 
    \{ \bu_1,\ldots,\bu_{nm - k} \} &= \{ i \in \N \, | \, \vectorize(\bM_{\bu})_i = 1 \},
    \end{aligned}
\end{equation*}
and the matrices $\cS_{\bc} \in \C^{k \times nm}$ and $\cS_{\bu} \in \C^{(nm-k) \times nm}$ via
\begin{equation*}
    \left(\cS_{\bc}\right)_{ij} = \begin{cases}
        1 \quad &\text{if } \bc_i = j \\
        0 \quad &\text{else}
    \end{cases} \qquad \text{and} \qquad \left(\cS_{\bu}\right)_{ij} = \begin{cases}
        1 \quad &\text{if } \bu_i = j \\
        0 \quad &\text{else}
    \end{cases}
\end{equation*}
\subsection{Selection operators for $d$ variables}
\label{sec:selectionmultivar}
To extend the previously introduced selection operators to $d$ variables we consider $\bM_{\bc}, \bM_{\bu} \in \C^{n_1 \times \cdots \times n_d}$ with
\begin{equation*}
    \left(\bM_{\bc}\right)_{i_1 \dots i_d} = \begin{cases}
        1 \quad & \text{if } (\lambda_{i_1},^{(1)}\ldots,\lambda_{i_d}^{(d)}) \in \bI \\
        0 \quad & \text{else}
    \end{cases} \quad \text{and} \quad \left(\bM_{\bu}\right)_{i_1 \dots i_d} = \begin{cases}
        0 \quad & \text{if } (\lambda_{i_1},^{(1)}\ldots,\lambda_{i_d}^{(d)}) \in \bI \\
        1 \quad & \text{else}
    \end{cases}
\end{equation*}
Next, we define
\begin{equation*}
    \begin{aligned}
    \{ \bc_1,\ldots,\bc_k \} &= \{ i \in \N \, | \, \vectorize(\bM_{\bc})_i = 1 \}, \\ 
    \{ \bu_1,\ldots,\bu_{nm - k} \} &= \{ i \in \N \, | \, \vectorize(\bM_{\bu})_i = 1 \},
    \end{aligned}
\end{equation*}
and the matrices $\cS_{\bc} \in \C^{k \times n_1 \cdots n_d}$ and $\cS_{\bu} \in \C^{(n_1 \cdots n_d - k) \times n_1 \cdots n_d}$ via
\begin{equation*}
    \left(\cS_{\bc}\right)_{ij} = \begin{cases}
        1 \quad &\text{if } \bc_i = j \\
        0 \quad &\text{else}
    \end{cases} \qquad \text{and} \qquad \left(\cS_{\bu}\right)_{ij} = \begin{cases}
        1 \quad &\text{if } \bu_i = j \\
        0 \quad &\text{else}
    \end{cases}
\end{equation*}

\end{document}